\newtheorem{lemma}{Lemma}[section]
\newtheorem{theorem}[lemma]{Theorem}
\newtheorem{remark}[lemma]{Remark}
\newtheorem{coro}[lemma]{Corollary}
\newtheorem{definition}[lemma]{Definition}
\newtheorem{example}[lemma]{Example}
\title{Global Attractors of Non-autonomous Lattice Dynamical Systems}
\author{David Cheban}
\address[D. Cheban]{State University of Moldova\\
Vladimir Andrunachievici Instiy=tute of Mathematics and Computer Science\\
Laboratory of Differential Equations\\
A. Mateevich Street 60\\
MD--2009 Chi\c{s}in\u{a}u, Moldova} \email[D.
Cheban]{david.ceban@usm.md, davidcheban@yahoo.com}
\author{Andrei Sultan}
\address[A. Sultan]{%
State University of Moldova\\Vladimir Andrunachievici Institute of
Mathematics and Computer Science\\ Laboratory of Differential Equations\\A. Mateevich Street 60\\
MD--2009 Chi\c{s}in\u{a}u, Moldova} \email[A.
Sultan]{andrew15sultan@gmail.com}
\date{\today}
\subjclass{34D05, 34D45, 34G20, 37B55} \keywords{Lattice Dynamical
Systems; Non-autonomous Dynamical Systems; Cocycles}
\begin{document}

\begin{abstract}
The aim of this paper is studying the compact global attractors
for non-autonomous lattice dynamical systems of the form
$u_{i}'=\nu (u_{i-1}-2u_i+u_{i+1})-\lambda u_{i}+f(u_i)+f_{i}(t)\
(i\in \mathbb Z,\ \lambda >0)$. We prove their dissipativness,
asymptotic compactness and then the existence of compact global
attractors.
\end{abstract}

\maketitle

\section{Introduction}\label{Sec1}

Denote by $\mathbb R :=(-\infty,\infty)$, $\mathbb Z :=\{0,\pm
1,\pm 2,\ldots\}$ and $\ell_{2}$ the Hilbert space of all
two-sided sequences $\xi =(\xi_{i})_{i\in \mathbb Z}$ ($\xi_{i}\in
\mathbb R$) with
\begin{equation}\label{eqI_1}
\sum\limits_{i\in \mathbb Z}|\xi_{i}|^{2}<+\infty \nonumber
\end{equation}
and equipped with the scalar product
\begin{equation}\label{eqI_2}
\langle \xi,\eta\rangle :=\sum\limits_{i\in \mathbb
Z}\xi_{i}\eta_{i} .\nonumber
\end{equation}
Let $(\mathfrak B, |\cdot|)$ be a Banach space with the norm
$|\cdot|$, $C(\mathbb R,\mathfrak B)$ be the space of all
continuous functions $f:\mathbb R\to \mathfrak B$ equipped with
the distance
\begin{equation}\label{eqI_3}
d(f_1,f_2):=\sup\limits_{L>0}\min\{\max\limits_{|t|\le
L}|f_{1}(t)-f_{2}(t)|,L^{-1}\}.
\end{equation}
The metric space $(C(\mathbb R,\mathfrak B),d)$ is complete and
the distance $d$, defined by (\ref{eqI_3}), generates on the space
$C(\mathbb R,\mathfrak B)$ the compact-open topology.

Let $h\in \mathbb R$, $f\in C(\mathbb R,\mathfrak B)$,
$f^{h}(t):=f(t+h)$ for any $t\in \mathbb R$ and $\sigma :\mathbb
R\times C(\mathbb R,\mathfrak B)\to C(\mathbb R,\mathfrak B)$ be a
mapping defined by $\sigma(h,f):=f^{h}$ for any $(h,f)\in \mathbb
R\times C(\mathbb R,\mathfrak B)$. Then \cite[Ch.I]{Che_2015} the
triplet $(C(\mathbb R,\mathfrak B),\mathbb R,\sigma)$ is a shift
dynamical system (or Bebutov's dynamical system) on he space
$C(\mathbb R,\mathfrak B)$. By $H(f)$ the closure in the space
$C(\mathbb R,\mathfrak B)$ of $\{f^{h}|\ h\in \mathbb R\}$ is
denoted.

In this paper we study the compact global attractors of the
systems
\begin{equation}\label{eqI1}
u_{i}'=\nu (u_{i-1}-2u_i+u_{i+1})-\lambda u_{i}+F(u_i)+f_{i}(t)\
(i\in \mathbb Z),
\end{equation}
where $\lambda >0$, $F\in C(\mathbb R, \mathbb R)$ and $f\in
C(\mathbb R,\ell_{2})$ ($f(t):=(f_{i}(t))_{i\in \mathbb Z}$ for
any $t\in \mathbb R$).

The system (\ref{eqI1}) can be considered as a discrete (see, for
example, \cite{BLW_2001}, \cite{HK_2023} and the bibliography
therein) analogue of a reaction-diffusion equation in $\mathbb R$:
\begin{equation}\label{eqI1.1}
 \frac{\partial{u}}{\partial{t}} = D\frac{\partial^{2}{u}}{\partial^{2}{x}}-\lambda u + F(u) +
 f(t,x),\nonumber
\end{equation}
where grid points are spaced $h$ distance apart and $\nu =
D/h^{2}$.

This study continues the first author's works devoted to the study
of compact global attractors of non-autonomous dynamical systems
\cite{Che_2015} and compact attractors of lattice dynamical
systems \cite{BLW_2001} (autonomous systems) and compact pullback
attractors \cite{HK_2023} (for non-autonomous systems).

The paper is organized as follows. In the second section we show
that under some conditions the equation (\ref{eqI1}) generates a
cocycle which plays a very important role in the study of the
asymptotic properties of the equation (\ref{eqI1}). In the third
section we prove that under some conditions the existence of an
absorbing set for the equation (\ref{eqI1}). The fourth section is
dedicated to the study the asymptotically compactness of the
cocycle generated by the equation (\ref{eqI1}). In the fifth
section we study the problem of existence of a compact global
attractor for the equation (\ref{eqI1}).

\section{Cocycles}\label{Sec2}

Consider a non-autonomous system
\begin{equation}\label{eq2.1}
u_{i}'=\nu (u_{i-1}-2u_i+u_{i+1})-\lambda u_{i}+F(u_i)+f_{i}(t)\
(i\in \mathbb Z) .
\end{equation}

Below we use the following conditions.

\emph{Condition }(\textbf{C1}). \label{C1} The function $f\in
C(\mathbb R,\mathfrak B)$ and it is translation-compact, i.e., the
set $\{f^{h}|\ h\in \mathbb R\}$ is pre-compact in the space
$C(\mathbb R,\mathfrak B)$.

\begin{lemma}\label{l2.1} \cite[Ch.IV,p.236]{Bro79},\cite[Ch.III]{Sel_1971},\cite[Ch.IV]{sib} The following statements are equivalent:
\begin{enumerate}
\item the function $f\in C(\mathbb R,\mathfrak B)$ is
translation-compact; \item the set $Q:=\overline{f(\mathbb R)}$ is
compact in $\mathfrak B$ and the function $f\in C(\mathbb
R,\mathfrak B)$ is uniformly continuous.
\end{enumerate}
\end{lemma}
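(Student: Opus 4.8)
The plan is to prove the two implications separately, using two facts already available: the distance $d$ from (\ref{eqI_3}) induces on $C(\mathbb R,\mathfrak B)$ the topology of uniform convergence on compact subsets of $\mathbb R$, and $(C(\mathbb R,\mathfrak B),d)$ is a complete metric space, so that "pre-compact'' is the same as "sequentially relatively compact'' and as "having compact closure''.

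\emph{Proof of $(1)\Rightarrow(2)$.} First I would use the evaluation map $\pi_{0}\colon C(\mathbb R,\mathfrak B)\to\mathfrak B$, $g\mapsto g(0)$, which is continuous for the compact-open topology. Since $K:=\overline{\{f^{h}\mid h\in\mathbb R\}}$ is compact by hypothesis and $\pi_{0}(f^{h})=f(h)$, the set $f(\mathbb R)=\pi_{0}(\{f^{h}\mid h\in\mathbb R\})$ is contained in the compact set $\pi_{0}(K)$, hence $Q=\overline{f(\mathbb R)}$ is a closed subset of a compact set and therefore compact. For uniform continuity I would argue by contradiction: if it fails, there exist $\varepsilon>0$ and sequences $t_{n},s_{n}\in\mathbb R$ with $\delta_{n}:=t_{n}-s_{n}\to 0$ and $|f(t_{n})-f(s_{n})|\ge\varepsilon$. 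Passing to a subsequence, $f^{s_{n}}\to g$ in $C(\mathbb R,\mathfrak B)$, i.e.\ uniformly on $[-1,1]$. Then $f(s_{n})=f^{s_{n}}(0)\to g(0)$, while $f(t_{n})=f^{s_{n}}(\delta_{n})$ satisfies
$|f^{s_{n}}(\delta_{n})-g(0)|\le |f^{s_{n}}(\delta_{n})-g(\delta_{n})|+|g(\delta_{n})-g(0)|\to 0$,
the first term by uniform convergence on $[-1,1]$ (and $\delta_{n}\in[-1,1]$ eventually), the second by continuity of $g$ at $0$. Hence $|f(t_{n})-f(s_{n})|\to 0$, a contradiction.

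\emph{Proof of $(2)\Rightarrow(1)$.} Here I would invoke the Arzel\`a--Ascoli theorem for maps from the locally compact Hausdorff space $\mathbb R$ into the complete metric (indeed Banach) space $\mathfrak B$, equipping $C(\mathbb R,\mathfrak B)$ with the compact-open topology: a subset is relatively compact if and only if it is equicontinuous and pointwise relatively compact. For $\{f^{h}\mid h\in\mathbb R\}$, pointwise relative compactness is immediate since $f^{h}(t)=f(t+h)\in Q$ for all $h,t$ and $Q$ is compact; equicontinuity follows from uniform continuity of $f$: given $\varepsilon>0$ pick $\delta>0$ with $|f(a)-f(b)|<\varepsilon$ whenever $|a-b|<\delta$, so that $|f^{h}(t)-f^{h}(s)|=|f(t+h)-f(s+h)|<\varepsilon$ for every $h$ whenever $|t-s|<\delta$. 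Thus $\{f^{h}\mid h\in\mathbb R\}$ is relatively compact in $C(\mathbb R,\mathfrak B)$, i.e.\ $f$ is translation-compact.

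The continuity of the evaluation maps and the application of Arzel\`a--Ascoli are routine; the only step that requires genuine care is the uniform-continuity half of $(1)\Rightarrow(2)$, where the shift structure must be combined with uniform convergence on a fixed compact neighbourhood of $0$ — the subtlety being that the increments $\delta_{n}$ vary with $n$, so one cannot appeal to convergence at a single fixed point but must exploit the uniformity of $f^{s_{n}}\to g$ on $[-1,1]$.
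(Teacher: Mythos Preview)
Your argument is correct: the evaluation-map trick gives compactness of $Q$, the contradiction argument using uniform convergence on $[-1,1]$ yields uniform continuity, and the converse is a direct application of Arzel\`a--Ascoli for maps $\mathbb R\to\mathfrak B$ under the compact-open topology. Note, however, that the paper does not actually supply a proof of this lemma; it simply cites the result from \cite[Ch.IV,p.236]{Bro79}, \cite[Ch.III]{Sel_1971}, and \cite[Ch.IV]{sib}, so there is no in-paper argument to compare against---your proof is precisely the standard one found in those references.
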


\emph{Condition} (\textbf{C2}). \label{C2} The function $F\in
C(\mathbb R,\mathbb R)$ is Lipschitz continuous on bounded sets
and $F(0)=0$.

Denote by $ \widetilde{F}:\ell_{2}\to \ell_{2}$ the Nemytskii
operator generated by $F$, i.e.,
$\widetilde{F}(\xi)_{i}:=F(\xi_{i})$ for any $i\in \mathfrak N$.

\emph{Condition} (\textbf{C3}). \label{C3} $sF(s)\leq -\alpha s^2$
for any $s\in \mathbb R$.

\begin{definition}\label{defL1.8} A function $F\in C(Y\times \mathfrak B,\mathfrak
B)$ is said to be globally Lipschitzian (respectively locally
Lipschitzian) with respect to variable $u\in \mathfrak B$
uniformly with respect to $y\in Y$ if there exists a positive
constant $L$ (for any bounded set $B \subset \mathfrak B$ there
exists a constant $L_{B}$) such that
\begin{equation}\label{eqL1.81}
|F(y,u_1)-F(y,u_2)|\le L |u_1-u_2| \nonumber
\end{equation}
(respectively,
\begin{equation}\label{eqL1.82}
|F(y,v_1)-F(y,v_2)|\le L_{B} |v_1-v_2|) \nonumber
\end{equation}
for any $u_1,u_2\in \mathfrak B$ and $y\in Y$ (respectively $v_1,
v_2 \in B \subset \mathfrak B$ and $y\in Y$).
\end{definition}

\begin{definition}\label{defL2.8}
The smallest constant $L$ (respectively $L_{B}$) with the property
(\ref{eqL1.81}) is called Lipshchitz constant of function $F$
(notation $Lip(F)$, respectively $Lip_{B}(F)$).
\end{definition}

Let $B \subset \mathfrak B$, denote by $CL(Y\times B,\mathfrak B)$
the Banach space of any Lipschitzian functions $F\in C(Y\times
B,\mathfrak B)$ equipped with the norm
\begin{equation}
||F||_{CL}:=\max\limits_{y\in Y}|F(y,0)|+Lip_{B}(F).\nonumber
\end{equation}

\begin{lemma}\label{l2.2} \cite{BLW_2001} Under the Condition (\textbf{C2})
it is well defined the mapping $\widetilde{F}:\ell_{2}\to
\ell_{2}$ and
\begin{equation}\label{eq2.2}
\|\widetilde{F}(\xi)-\widetilde{F}(\eta)\|\le Lip_{B}(F)\|\xi
-\eta \| \nonumber
\end{equation}
for any $\xi,\eta\in \ell_{2}$, where $\|\cdot\|^{2}:=\langle
\cdot,\cdot \rangle$ and $\|\cdot \|$ is the norm on the space
$\ell_{2}$.
\end{lemma}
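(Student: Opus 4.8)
The plan is to exploit the two consequences of Condition (\textbf{C2}): the normalization $F(0)=0$, which pins down the value of the scalar nonlinearity at the origin, and the local Lipschitz property of $F$ on bounded subsets of $\mathbb R$. The one elementary but essential observation is the continuous embedding $\ell_2\hookrightarrow\ell_\infty$: for any $\xi=(\xi_i)_{i\in\mathbb Z}\in\ell_2$ one has $\sup_{i\in\mathbb Z}|\xi_i|\le\|\xi\|$, so \emph{all} coordinates of $\xi$ lie in the fixed compact interval $B_\xi:=[-\|\xi\|,\|\xi\|]\subset\mathbb R$. Consequently the scalar function $F$ is only ever evaluated on a fixed bounded set, on which its Lipschitz constant $Lip_{B_\xi}(F)$ is available.

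First I would verify that $\widetilde F$ maps $\ell_2$ into itself. Fix $\xi\in\ell_2$. For every $i\in\mathbb Z$ we have $\xi_i\in B_\xi$, and combining $F(0)=0$ with the Lipschitz bound on $B_\xi$ gives
\[
|F(\xi_i)|=|F(\xi_i)-F(0)|\le Lip_{B_\xi}(F)\,|\xi_i| .
\]
Squaring and summing over $i\in\mathbb Z$ yields $\sum_{i\in\mathbb Z}|F(\xi_i)|^2\le Lip_{B_\xi}(F)^2\|\xi\|^2<+\infty$, so $\widetilde F(\xi)\in\ell_2$ and the operator $\widetilde F:\ell_2\to\ell_2$ is well defined.

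Next I would prove the Lipschitz estimate. Given $\xi,\eta\in\ell_2$, pick a bounded set $B\subset\mathbb R$ containing all coordinates of both sequences (for instance $B=[-r,r]$ with $r=\max\{\|\xi\|,\|\eta\|\}$, or the set $B$ prescribed in the statement). Applying the scalar Lipschitz inequality on $B$ coordinatewise gives $|F(\xi_i)-F(\eta_i)|\le Lip_B(F)\,|\xi_i-\eta_i|$ for each $i\in\mathbb Z$; squaring, summing over $i\in\mathbb Z$, and taking square roots then gives
\[
\|\widetilde F(\xi)-\widetilde F(\eta)\|^2=\sum_{i\in\mathbb Z}|F(\xi_i)-F(\eta_i)|^2\le Lip_B(F)^2\sum_{i\in\mathbb Z}|\xi_i-\eta_i|^2=Lip_B(F)^2\|\xi-\eta\|^2,
\]
which is exactly the claimed inequality (\ref{eq2.2}).

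There is no substantive obstacle in this lemma; the only point requiring a little care is the logical one that, although $F$ is merely \emph{locally} Lipschitz, the embedding $\ell_2\subset\ell_\infty$ confines all coordinates of any given pair $\xi,\eta$ to one fixed bounded set, so a single constant $Lip_B(F)$ controls that pair. Everything else reduces to interchanging a nonnegative double sum (Tonelli) and the monotonicity of $t\mapsto\sqrt t$.
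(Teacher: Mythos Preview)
Your argument is correct and complete: the embedding $\ell_2\hookrightarrow\ell_\infty$ confines all coordinates to a fixed bounded interval, after which $F(0)=0$ gives well-definedness and the scalar Lipschitz bound gives the estimate coordinatewise. The paper does not supply its own proof of this lemma but simply cites \cite{BLW_2001}; the argument there is the same elementary computation you give, so there is nothing further to compare.
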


For any $u = (u_{i})_{i\in \mathbb Z}$, the discrete Laplace
operator $\Lambda$ is defined \cite[Ch.III]{HK_2023} from
$\ell_{2}$ to $\ell_{2}$ component wise by $\Lambda(u)_{i} =
u_{i-1} - 2u_{i} + u_{i+1}$ ($i\in \mathbb Z$). Define the
bounded linear operators $D^{+}$ and $D^{-}$ from $\ell_{2}$ to
$\ell_{2}$ by $(D^{+}u)_{i} = u_{i+1} - u_{i},\ (D^{-}u)_{i} =
u_{i-1} - u_{i}\ (i\in \mathbb Z)$.

Note that $\Lambda = D^{+}D^{-} = D^{-}D^{+}$ and $\langle D^{-}u,
v\rangle = \langle u, D^{+}v\rangle $ for any $u,v\in \ell_{2}$
and, consequently, $\langle \Lambda u,u \rangle = -|D^{+}u|^{2}\le
0$. Since $\Lambda$ is a bounded linear operator acting on the
space $\ell_{2}$, it generates a uniformly continuous semi-group
on $\ell_{2}$.

Under the Conditions (\textbf{C1}) and (\textbf{C2}) the system of
differential equations (\ref{eq2.1}) can be written in the form of
an ordinary differential equation
\begin{equation}\label{eq2.3}
u'=\nu \Lambda u +\Phi (u)+f(t)
\end{equation}
in the Banach space $\mathfrak B=\ell_{2}$, where
$\Phi(u):=-\lambda u +\widetilde{F} (u)$ and
$\Lambda(u)_{i}:=u_{i-1}-2u_{i}+u_{i+1}$ for any $u=(u_i)_{i\in
\mathbb Z}\in \ell_{2}$. Along with equation (\ref{eq2.3}) we
consider also it $H$-class, i.e., the family equations
\begin{equation}\label{eq2.3g}
u'=\nu \Lambda u +\Phi (u)+g(t),
\end{equation}
where $g\in H(f)$.

The family of equations (\ref{eq2.3g}) can be rewritten as follows
\begin{equation}\label{eq2.3H}
u'=F(\sigma(t,g),u)\ \ (g\in H(f)),
\end{equation}
where $F:H(f)\times \ell_{2}\to \ell_{2}$ is defined by
$F(g,u):=\nu \Lambda u+\Phi (u) +g(0)$. It easy to see that
$F(\sigma(t,g),u)=\nu \Lambda u+\Phi (u)+g(t)$ for any $(t,u,g)\in
\mathbb R\times \mathfrak B \times H(f)$.



Let $Y$ be a complete metric space, $(Y,\mathbb R,\sigma)$ be a
dynamical system on $Y$ and $ \Lambda $ be some complete metric
space of linear closed operators acting into Banach space $
\mathfrak B $. Consider the following linear differential equation
\begin{equation}\label{eqLS01.81}
x'=A(\sigma(t,y))x,\  \ (y\in Y)
\end{equation}
where $A\in C(Y,\Lambda)$. We assume that the following conditions
are fulfilled for equation (\ref{eqLS01.81}):
\begin{enumerate}
\item[a.] for any $ u \in \mathfrak B $ and $y\in Y $ equation
(\ref{eqLS01.81}) has exactly one solution that is defined on $
\mathbb R_{+} $ and satisfies the condition $ \varphi (0,u,y) = u
;$ \item[b.] the mapping $ \varphi : (t,u,y) \to \varphi (t,u,y) $
is continuous in the topology of $ \mathbb R_{+} \times \mathfrak
B \times Y$.
\end{enumerate}

Denote by $U(t,y):=\varphi(t,\cdot,y)$ for any $(t,y)\in \mathbb
R_{+}\times Y$.

Consider an evolutionary differential equation
\begin{equation}\label{eqSL1}
u'=A(\sigma(t,y))u + F(\sigma(t,y),u) \ \ (y\in Y)
\end{equation}
in the Banach space $\mathfrak B$, where $F$ is a nonlinear
continuous mapping ("small" perturbation) acting from $Y\times
\mathfrak B$ into $\mathfrak B$.

\begin{definition}
A function $u:[0,a)\mapsto \mathfrak B$ is said to be a weak
(mild) solution of equation (\ref{eqSL1}) passing through the
point $x\in \mathfrak B$ at the initial moment $t=0$ (notation
$\varphi(t,x,y)$) if $u\in C([0,T],\mathfrak B)$ and satisfies the
integral equation
\begin{equation}\label{eqSL3}
u(t)=U(t,y)x+\int_{0}^{t}U(t-s,\sigma(s,y))F(\sigma(s,y),u(s))ds\nonumber
\end{equation}
for any $t\in [0,T]$ and $0<T<a$.
\end{definition}

\begin{theorem}\label{thLS1.S1} \cite[Ch.VI]{Che_2020}
Suppose that the function $F\in C(Y\times \mathfrak B,\mathfrak
B)$ is locally Lipschitzian. Let $x_0\in \mathfrak B$, $r>0 $ and
the conditions listed above be fulfilled. Then, there exist
positive numbers $\delta =\delta (x_0,r)$ and $T=T(x_0,r) $ such
that equation (\ref{eqSL1}) admits a unique solution $\varphi
(t,x,y)$ ($x\in B[x_0,\delta]:=\{x\in \mathfrak B \ | \ \vert x
-x_0 \vert \le \delta \} $) defined on the interval $[0,T]$ with
the conditions: $\varphi (0,x,y)=x$, $\vert \varphi
(t,x,y)-x_0\vert \le r$ for any $t\in [0,T]$ and the mapping $
\varphi : [0,T]\times B[x_0,\delta] \times Y \to \mathfrak B\ (
(t,x,y)\mapsto \varphi (t,x,y))$ is continuous.
\end{theorem}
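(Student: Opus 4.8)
The plan is to rewrite (\ref{eqSL1}) as the fixed point equation $u=\Psi_{x,y}u$ for the variation-of-constants (Duhamel) operator on a suitable space of curves, to solve it by the contraction mapping principle, and to obtain the joint continuity in $(t,x,y)$ from the parametrized version of that principle. Fix $x_0\in\mathfrak B$ and $r>0$ and put $B:=B[x_0,r]$. Since $F$ is locally Lipschitzian, Definition~\ref{defL1.8} gives a constant $L=L_{B}$, independent of $y$, with $|F(y,v_1)-F(y,v_2)|\le L|v_1-v_2|$ for all $v_1,v_2\in B$ and $y\in Y$; in particular $|F(y,u)|\le N:=\sup_{y\in Y}|F(y,x_0)|+Lr$ for $u\in B$, $y\in Y$ (this supremum is finite on the sets relevant here, in particular whenever $Y$ is compact). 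By the standing hypothesis~(b) the map $(t,u,y)\mapsto U(t,y)u$ is continuous, so there is $M\ge 1$ with $\|U(t,y)\|\le M$ for all $t\in[0,1]$, $y\in Y$, and, since $U(0,y)x_0=x_0$, a number $T_0\in(0,1]$ with $\|U(t,y)x_0-x_0\|\le r/3$ for $t\in[0,T_0]$, $y\in Y$.

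On $\mathcal M:=C([0,T],B)$, a closed and hence complete subset of $C([0,T],\mathfrak B)$ with the uniform distance, define for $x\in B[x_0,\delta]$ and $y\in Y$
\[
(\Psi_{x,y}u)(t):=U(t,y)x+\int_{0}^{t}U(t-s,\sigma(s,y))\,F(\sigma(s,y),u(s))\,ds .
\]
The integrand is continuous in $s$ (it is a composition of the continuous maps $s\mapsto\sigma(s,y)$, $u$, $F$ and $U$), so the integral is well defined and $\Psi_{x,y}u\in C([0,T],\mathfrak B)$. The estimate
\[
\|(\Psi_{x,y}u)(t)-x_0\|\le\|U(t,y)(x-x_0)\|+\|U(t,y)x_0-x_0\|+\int_0^t\|U(t-s,\sigma(s,y))\|\,|F(\sigma(s,y),u(s))|\,ds\le M\delta+\tfrac r3+MNT
\]
shows $\Psi_{x,y}(\mathcal M)\subset\mathcal M$ as soon as $\delta\le r/(3M)$ and $T\le\min\{T_0,\,r/(3MN)\}$, while
\[
\|(\Psi_{x,y}u)(t)-(\Psi_{x,y}v)(t)\|\le\int_0^t\|U(t-s,\sigma(s,y))\|\,|F(\sigma(s,y),u(s))-F(\sigma(s,y),v(s))|\,ds\le MLT\sup_{0\le s\le T}\|u(s)-v(s)\|
\]
shows that $\Psi_{x,y}$ is a $\tfrac12$-contraction once also $T\le 1/(2ML)$. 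With these $\delta=\delta(x_0,r)$ and $T=T(x_0,r)$ the contraction mapping principle yields a unique fixed point $\varphi(\cdot,x,y)\in\mathcal M$; it is the mild solution of (\ref{eqSL1}), it satisfies $\varphi(0,x,y)=x$ and $|\varphi(t,x,y)-x_0|\le r$ on $[0,T]$, and it is the only solution with these properties.

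For the joint continuity, note that the contraction rate $\tfrac12$ is the same for all parameters $(x,y)$, so by the contraction mapping principle with parameters it suffices to check that, for each fixed $u\in\mathcal M$, the map $(x,y)\mapsto\Psi_{x,y}u$ is continuous from $B[x_0,\delta]\times Y$ into $C([0,T],\mathfrak B)$; this follows from the joint continuity of $(t,u,y)\mapsto U(t,y)u$ in hypothesis~(b), the continuity of $F$ and of the shift $\sigma$, the bound $N$ on the integrand, and the dominated convergence theorem. Consequently $(x,y)\mapsto\varphi(\cdot,x,y)$ is continuous into $C([0,T],\mathfrak B)$, and since in addition each $\varphi(\cdot,x,y)$ is continuous on $[0,T]$, the map $(t,x,y)\mapsto\varphi(t,x,y)$ is jointly continuous.

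The main obstacle is of two kinds. First, the constants $M$, $N$ and the number $T_0$ from the first paragraph must be chosen uniformly in $y\in Y$, because $\delta$ and $T$ are required not to depend on $y$; this is automatic when $Y$ is compact --- the situation in every application of this theorem in the present paper, where $Y=H(f)$ --- and otherwise has to be read off from the hypotheses imposed on $U$. Second, the passage from the continuous dependence of the fixed point on the parameter $(x,y)$ to the full joint continuity in $(t,x,y)$ is precisely what the parametrized contraction principle is designed to handle, and the fact that all curves $\varphi(\cdot,x,y)$ lie in the single complete space $\mathcal M$ with a common contraction rate is what makes that argument available.
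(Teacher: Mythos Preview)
The paper does not give its own proof of this theorem: it is quoted from \cite[Ch.~VI]{Che_2020} and used as a black box. Your contraction-mapping argument on the Duhamel operator is exactly the standard route to such local existence/uniqueness/continuous-dependence results, and in outline it is correct. One point deserves a firmer treatment rather than the hedge you give: the constants $M=\sup_{t\in[0,1],\,y\in Y}\|U(t,y)\|$, $N=\sup_{y\in Y}|F(y,x_0)|+Lr$, and $T_0$ must be finite \emph{uniformly in $y\in Y$} to produce $\delta$ and $T$ that do not depend on $y$. You note that compactness of $Y$ suffices; that is true and covers the application in this paper (where $Y=H(f)$), but the theorem as stated allows $Y$ to be an arbitrary complete metric space, and nothing in hypotheses (a)--(b) or in Definition~\ref{defL1.8} forces these suprema to be finite in general. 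So either the theorem tacitly assumes $Y$ compact (as in every use here), or some extra uniform boundedness of $U$ and of $y\mapsto F(y,x_0)$ is being imported from \cite{Che_2020}; you should say explicitly which reading you adopt rather than leaving it as a caveat at the end.
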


\begin{remark}\label{remSL1} Under the conditions of Theorem
\ref{thLS1.S1}:
\begin{enumerate}
\item if $\psi$ is a solution of equation (\ref{eqSL1}) on some
interval $[0,h]$, then $\psi$ can be extended over a maximal
interval of existence $[0,\alpha)$; \item if the solution $\psi$
is bounded, then $\psi$ can be extended on the interval
$[0,+\infty)$.
\end{enumerate}
\end{remark}

This statement can be proved using the same arguments as in the
case of ordinary differential equations (see, for example,
\cite[Ch.IV]{Bur}).

\begin{theorem}\label{th1.1} Under the Conditions (\textbf{C1}) and
(\textbf{C2}) there exist positive numbers $\delta =\delta
(u_0,r)$ and $T=T(u_0,r) $ such that equation (\ref{eqSL1}) admits
a unique solution $\varphi (t,g,y)$ ($u\in B[u_0,\delta]=\{u\in
\ell_{2}\ \|u -u_0 \| \le \delta \} $) defined on the interval
$[0,T]$ with the conditions: $\varphi (0,u,g)=u$, $\|\varphi
(t,u,g)-u_0\| \le r$ for any $t\in [0,T]$ and the mapping $
\varphi : [0,T]\times B[u_0,\delta] \times H(f) \to \ell_{2}\ (
(t,u,g)\mapsto \varphi (t,u,g))$ is continuous.
\end{theorem}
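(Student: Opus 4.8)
The plan is to reduce Theorem~\ref{th1.1} to the abstract local existence result Theorem~\ref{thLS1.S1} by checking its hypotheses for the particular setting $Y = H(f)$, $\mathfrak B = \ell_2$, with the linear part $A(\sigma(t,g)) = \nu\Lambda$ (independent of the base point) and the nonlinear perturbation $\mathcal F(g,u) := \Phi(u) + g(0) = -\lambda u + \widetilde F(u) + g(0)$. Here I would first note that since $\Lambda$ is a bounded linear operator on $\ell_2$, the conditions (a) and (b) preceding \eqref{eqSL1} are trivially satisfied: equation $x' = \nu\Lambda x$ has the unique global solution $U(t,g)x = e^{\nu t\Lambda}x$, the uniformly continuous semigroup generated by $\nu\Lambda$, and $(t,u,g)\mapsto e^{\nu t\Lambda}u$ is jointly continuous (indeed it does not depend on $g$ at all). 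Thus the mild-solution formulation \eqref{eqSL3} makes sense.

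The key step is then verifying that $\mathcal F \in C(H(f)\times\ell_2,\ell_2)$ is locally Lipschitzian in $u$ uniformly in $g\in H(f)$, in the sense of Definition~\ref{defL1.8}. Continuity: the map $g\mapsto g(0)$ is continuous from $H(f)$ (with the compact-open topology) into $\ell_2$; the map $u\mapsto -\lambda u$ is linear bounded; and $u\mapsto\widetilde F(u)$ is continuous on $\ell_2$ by Lemma~\ref{l2.2}. Local Lipschitz estimate: for $u,v$ in a bounded set $B\subset\ell_2$ and any $g\in H(f)$,
\begin{equation*}
\|\mathcal F(g,u)-\mathcal F(g,v)\| \le \lambda\|u-v\| + \|\widetilde F(u)-\widetilde F(v)\| \le \bigl(\lambda + Lip_{B}(F)\bigr)\|u-v\|,
\end{equation*}
where the bound on $\widetilde F$ comes from Lemma~\ref{l2.2}; crucially this constant $L_B := \lambda + Lip_B(F)$ is independent of $g$, so Condition~(\textbf{C2}) is exactly what is needed. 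With these facts in hand, Theorem~\ref{thLS1.S1} applied to equation \eqref{eqSL1} with this $A$ and $\mathcal F$ yields, for each $u_0\in\ell_2$ and $r>0$, numbers $\delta=\delta(u_0,r)$ and $T=T(u_0,r)$ and a unique mild solution $\varphi(t,u,g)$ on $[0,T]$ through each $u\in B[u_0,\delta]$ with $\|\varphi(t,u,g)-u_0\|\le r$ and $(t,u,g)\mapsto\varphi(t,u,g)$ continuous on $[0,T]\times B[u_0,\delta]\times H(f)$. This is precisely the assertion of Theorem~\ref{th1.1}.

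I expect the only genuine subtlety to be the continuity of the evaluation map $g\mapsto g(0)$ from $H(f)$ into $\ell_2$ and, relatedly, the continuity of $(t,g)\mapsto g(t)$ needed to interpret \eqref{eqSL3} — both follow from the compact-open topology on $C(\mathbb R,\ell_2)$, but it is worth recording that Condition~(\textbf{C1}) (via Lemma~\ref{l2.1}) guarantees $H(f)$ is compact and the functions in $H(f)$ are uniformly continuous with values in a fixed compact set, which makes $(t,g)\mapsto g(t)$ jointly continuous. Everything else is a routine translation of notation: $\mathfrak B$ becomes $\ell_2$, the distance on $H(f)$ is the restriction of \eqref{eqI_3}, and $F(\sigma(t,g),u) = \nu\Lambda u + \Phi(u) + g(t)$ as already computed after \eqref{eq2.3H}. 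No fixed-point argument needs to be redone here since Theorem~\ref{thLS1.S1} is quoted.
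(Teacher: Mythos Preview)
Your proposal is correct and follows essentially the same route as the paper: both reduce Theorem~\ref{th1.1} to the abstract local existence result Theorem~\ref{thLS1.S1} by verifying continuity and the uniform-in-$g$ local Lipschitz property of the nonlinearity via Lemma~\ref{l2.2}. The only cosmetic difference is that the paper absorbs $\nu\Lambda u$ into the nonlinear term $F(g,u)=\nu\Lambda u+\Phi(u)+g(0)$ (taking $A\equiv 0$), whereas you keep $\nu\Lambda$ as the linear part $A$ and set $\mathcal F(g,u)=\Phi(u)+g(0)$; since $\Lambda$ is bounded on $\ell_2$ either splitting works and the verification is the same.
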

\begin{proof}
Assume that the Conditions (\textbf{C1}) and (\textbf{C2}) are
fulfilled. Consider the equation (\ref{eq2.3H}), where
$F(g,u):=\nu \Lambda u +\Phi(u)+g(0)$ for any $(u,g)\in
\ell_{2}\times H(f)$. It easy to check that under the conditions
of Theorem the mapping $F$ possesses the following properties:
\begin{enumerate}
\item $F$ is continuous; \item the mapping $F$ is locally
Lipschitzian in $u\in \ell_{2}$ uniformly with respect to $g\in
H(f)$, i.e., for any bounded subset $B\subset \ell_{2}$ there
exists a positive constant $L_{F}(B)$ such that
\begin{equation}\label{eqLB1}
\|F(u_1,g)-F(u_2,g)\|\le L_{F}(B)\|u_1-u_2\|\nonumber
\end{equation}
for any $u_1,u_2\in B$ and $g\in H(f)$; \item there exists a
positive constant $A$ such that
\begin{equation}\label{eqA1}
\|F(g,0)\|\le C\nonumber
\end{equation}
for any $g\in H(f)$.
\end{enumerate}

Now to finish the proof of Theorem it suffices to apply Theorem
\ref{thLS1.S1}.
\end{proof}

\begin{lemma}\label{lBC1} Assume that the conditions (\textbf{C1})--(\textbf{C3}) holds and $g \in H(f)$.
Then, for every $T > 0$, any solution $v(t)$ of the problem
(\ref{eq2.3g}) and $v(0)=v_0 \in \ell_{2}$ satisfies
\[
\|v(t)\| \le M \;,\quad \text{for all } 0 \le t \le T\,,
\]
where $M$ is a constant depending only on the data $(\lambda, C,
\|v_0\|)$ and $T$, where $C:=\sup\{\|f(t)\|\ | \ t\in \mathbb
R\}$.
\end{lemma}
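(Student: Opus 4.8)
The plan is to obtain a differential inequality for $y(t):=\|v(t)\|^{2}$ and close it with Gronwall's lemma on the finite interval $[0,T]$. First I would note that the solution $v$ produced by Theorem~\ref{th1.1} is not merely a mild solution but a classical one: since $\Lambda$ is a \emph{bounded} linear operator on $\ell_{2}$, the semigroup $e^{\nu t\Lambda}$ is norm-continuous, and the map $t\mapsto \nu\Lambda v(t)+\Phi(v(t))+g(t)$ is continuous along the solution; hence $v\in C^{1}([0,T],\ell_{2})$ and $v$ satisfies (\ref{eq2.3g}) strongly. In particular $t\mapsto\|v(t)\|^{2}$ is continuously differentiable.

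Next I would take the scalar product of (\ref{eq2.3g}) with $v(t)$ in $\ell_{2}$, obtaining
\[
\tfrac{1}{2}\tfrac{d}{dt}\|v(t)\|^{2}=\nu\langle\Lambda v(t),v(t)\rangle-\lambda\|v(t)\|^{2}+\langle\widetilde{F}(v(t)),v(t)\rangle+\langle g(t),v(t)\rangle .
\]
Then I would estimate the four terms: by the identity $\langle\Lambda u,u\rangle=-\|D^{+}u\|^{2}\le0$ noted in Section~\ref{Sec2}, the first term is $\le0$; by Lemma~\ref{l2.2} we have $\widetilde{F}(v)\in\ell_{2}$, so the series $\langle\widetilde{F}(v),v\rangle=\sum_{i}F(v_{i})v_{i}$ converges absolutely and, by Condition~(\textbf{C3}), $\langle\widetilde{F}(v),v\rangle\le-\alpha\|v\|^{2}$; finally, since $g\in H(f)$ is a uniform limit on compacts of translates of $f$, one has $\|g(t)\|\le C:=\sup_{t\in\mathbb R}\|f(t)\|<\infty$ (finite by Condition~(\textbf{C1})), so Cauchy--Schwarz and Young's inequality give $\langle g(t),v(t)\rangle\le\tfrac12 C^{2}+\tfrac12\|v(t)\|^{2}$. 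Combining these yields the scalar inequality
\[
y'(t)\le\bigl(1-2\lambda-2\alpha\bigr)\,y(t)+C^{2},\qquad y(0)=\|v_{0}\|^{2}.
\]

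Finally I would integrate this inequality. With $\beta:=|1-2\lambda-2\alpha|$, Gronwall's lemma gives $y(t)\le\bigl(\|v_{0}\|^{2}+C^{2}T\bigr)e^{\beta T}$ for all $t\in[0,T]$, so the assertion holds with $M:=\bigl(\|v_{0}\|^{2}+C^{2}T\bigr)^{1/2}e^{\beta T/2}$, which depends only on $\lambda$, $C$, $\|v_{0}\|$ and $T$ (the fixed constant $\alpha$ of Condition~(\textbf{C3}) enters only through $\beta$, and may be dropped when $\alpha\ge0$). I do not expect a genuine obstacle: the only points that need a sentence of justification are the $C^{1}$-regularity of the solution (immediate because $\Lambda$ is bounded) and the absolute convergence of $\sum_{i}F(v_{i})v_{i}$, which legitimizes summing the pointwise bound in Condition~(\textbf{C3}).
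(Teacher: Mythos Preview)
Your proof is correct and follows essentially the same energy-estimate-plus-Gronwall route as the paper's own argument. The only cosmetic difference is the parameter in Young's inequality: the paper uses $\langle g,v\rangle\le\tfrac{\lambda}{2}\|v\|^{2}+\tfrac{1}{2\lambda}\|g\|^{2}$, obtaining the strictly negative coefficient $-(\lambda+2\alpha)$ in the differential inequality and hence a bound that is actually uniform in $T$, whereas your choice yields a $T$-dependent bound---either is sufficient for the lemma as stated, and your extra sentence justifying $C^{1}$-regularity of $v$ (via the boundedness of $\Lambda$) is a point the paper takes for granted.
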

\begin{proof} Let $v(t)$ be a solution of the equation
(\ref{eq2.3g}) with the initial condition $v(0)=v_0$ defined on
the maximal interval $[0,h)$. Denote by $y(t):=|v(t)|^{2}$ then we
have
\begin{eqnarray}\label{eqDE1}
& y'(t)=2(v'(t),v(t))=2(\nu \Lambda
v(t),v(t))+2(\Phi(v(t)),v(t))+2(g(t),v(t))= \nonumber \\
& -2\nu |D^{+}v(t)|^{2}-2\lambda
|v(t)|^{2}+2(\tilde{F}(v(t)),v(t))+2(g(t),v(t))
\end{eqnarray}
for any $t\in [0,h)$.

Since
\[
|(g(t),v(t))| \le \|g(t)\|\|v(t)\| \le \frac{1}{2}\lambda
\|v(t)\|^2 + \frac{1}{2\lambda}\|g(t)\|^2\,,
\]
using (\textbf{C3}) from (\ref{eqDE1}) we get
\begin{equation}\label{eqDE2}
y'(t)=2(v'(t),v(t)) \le -(\lambda +2\alpha)y(t)+
\frac{C^{2}}{\lambda}
\end{equation}
for any $t\in [0,h)$. By Gronwall's lemma from the inequality
(\ref{eqDE2}), taking into account that $y(0)=|v(0)|^{2}$, we
obtain
\begin{equation}\label{eqDE3}
y(t)\le e^{-(\lambda
+2\alpha)t}\big{(}(|v(0)|^{2}-\frac{C^{2}}{\lambda (\lambda
+2\alpha)})+\frac{C^{2}}{\lambda (\lambda
+2\alpha)}\big{)}\nonumber
\end{equation}
and, consequently,
\begin{equation}\label{eqDE4}
|v(t)|\le M \nonumber
\end{equation}
for any $0\le t\le T<h$, where
\begin{equation}\label{eqDE5}
M=M(T,|v_0|,C):=\Big{(}e^{-(\lambda
+2\alpha)T}\big{(}(|v_0|^{2}-\frac{C^{2}}{\lambda (\lambda
+2\alpha)})+\frac{C^{2}}{\lambda (\lambda
+2\alpha)}\big{)}\Big{)}^{1/2}.\nonumber
\end{equation}
Lemma is proved.
\end{proof}

\begin{remark}\label{remDC1} Lemma \ref{lBC1} remains true if we
replace the Condition (\textbf{C3}) by the weaker condition:
$F(s)s\le 0$ for any $s\in \mathbb R$.
\end{remark}

\begin{theorem} Under the Conditions (\textbf{C1})-(\textbf{C3}) the following statements hold:
\begin{enumerate}
    \item for any $(v,g)\in \ell_{2}\times H(f)$ there exists a unique
    solution $\varphi(t,v,g)$ of the equation (\ref{eq2.3g}) passing
    through the point $v$ at the initial moment $t=0$ and defined on
    the semi-axis $\mathbb R_{+}:=[0,+\infty)$; \item
    $\varphi(0,v,g)=v$ for any $(v,g)\in \ell_{2}\times H(f)$; \item
    $\varphi(t+\tau,v,g)=\varphi(t,\varphi(\tau,v,g),g^{\tau})$ for
    any $t,\tau\in \mathbb R_{+}$, $v\in \ell_{2}$ and $g\in H(f)$;
     \item the mapping
    $\varphi :\mathbb R_{+}\times \ell_{2}\times H(f)\to \ell_{2}$
    ($(t,v,g)\to \varphi(t,v,g))$ for any $(t,v,g)\in \mathbb
    R_{+}\times \ell_{2}\times H(f)$ is continuous.
\end{enumerate}
\end{theorem}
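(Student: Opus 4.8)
\emph{Overall strategy.} The plan is to deduce all four assertions from the local well-posedness Theorem \ref{th1.1}, the a priori bound of Lemma \ref{lBC1}, and the uniqueness of mild solutions, working throughout with the integral (mild) formulation of \eqref{eq2.3g}. Two structural facts keep the argument short: the linear part $\nu\Lambda$ of \eqref{eq2.3g} does not depend on $g$, and since $\Lambda$ is a bounded operator with $\langle\Lambda u,u\rangle=-|D^{+}u|^{2}\le 0$ and $\nu\ge 0$, the uniformly continuous semigroup $\{e^{\nu\Lambda t}\}_{t\ge 0}$ it generates is a contraction: $\|e^{\nu\Lambda t}\|\le 1$ for $t\ge 0$.

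\emph{Items (i) and (ii).} For fixed $(v,g)\in\ell_{2}\times H(f)$, Theorem \ref{th1.1} (with $u_0=v$) gives a unique mild solution $\varphi(\cdot,v,g)$ of \eqref{eq2.3g} on some interval $[0,\delta]$ with $\varphi(0,v,g)=v$; by Remark \ref{remSL1} it extends to a maximal interval $[0,\alpha)$, and local uniqueness propagates to uniqueness on all of $[0,\alpha)$. To see that $\alpha=+\infty$, observe first that every $g\in H(f)$ is a compact-open limit of translates $f^{h_n}$, so $\|g(t)\|\le C:=\sup_{t\in\mathbb R}\|f(t)\|$ for all $t$, with $C<+\infty$ by Condition (\textbf{C1}) and Lemma \ref{l2.1}. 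Thus Lemma \ref{lBC1} applies on each $[0,T]$ with $T<\alpha$ and bounds $\varphi(\cdot,v,g)$ there by $M(T,\|v\|,C)$; if $\alpha<+\infty$ the solution would be bounded on $[0,\alpha)$, hence (Remark \ref{remSL1}) extend beyond $\alpha$ --- a contradiction. So $\alpha=+\infty$, which is (i); and (ii) is the normalization built into Theorem \ref{th1.1}.

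\emph{Item (iii).} Fix $t,\tau\in\mathbb R_{+}$, $v\in\ell_{2}$, $g\in H(f)$. Since the hull $H(f)$ is shift-invariant, $g^{\tau}=\sigma(\tau,g)\in H(f)$. Put $w:=\varphi(\tau,v,g)$, $\psi(t):=\varphi(t+\tau,v,g)$, $\chi(t):=\varphi(t,w,g^{\tau})$. Writing out the mild identity for $\varphi(t+\tau,v,g)$, splitting the integral at $s=\tau$, using the group law $e^{\nu\Lambda(t+\tau-s)}=e^{\nu\Lambda t}e^{\nu\Lambda(\tau-s)}$ and the substitution $s\mapsto s+\tau$ in the remaining integral, one checks that $\psi$ satisfies the mild equation for \eqref{eq2.3g} with $g$ replaced by $g^{\tau}$ and with $\psi(0)=w$; the function $\chi$ satisfies the same equation by definition. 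Uniqueness from (i) then forces $\psi\equiv\chi$ on $\mathbb R_{+}$, which is (iii).

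\emph{Item (iv) and the main obstacle.} The delicate point is upgrading the local joint continuity of Theorem \ref{th1.1} to continuity on all of $\mathbb R_{+}\times\ell_{2}\times H(f)$; I plan to do this by a Gronwall estimate rather than a finite-covering/continuation argument. Fix $(t_0,v_0,g_0)$ and $T>t_0$, and let $B\subset\ell_{2}$ be a bounded neighbourhood of $v_0$. By Lemma \ref{lBC1}, with the common bound $C$ above, there is $R>0$ with $\varphi(t,v,g)\in B[0,R]$ for all $t\in[0,T]$, $v\in B$, $g\in H(f)$; on $B[0,R]$ the map $\Phi(u)=-\lambda u+\widetilde{F}(u)$ is Lipschitz with some constant $L$ (Lemma \ref{l2.2}). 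Subtracting the mild identities for $\varphi(\cdot,v,g)$ and $\varphi(\cdot,v_0,g_0)$ and using $\|e^{\nu\Lambda t}\|\le 1$ gives, for $0\le t\le T$,
\begin{equation}
\|\varphi(t,v,g)-\varphi(t,v_0,g_0)\|\le\|v-v_0\|+\int_{0}^{t}\Big(L\,\|\varphi(s,v,g)-\varphi(s,v_0,g_0)\|+\|g(s)-g_0(s)\|\Big)\,ds,\nonumber
\end{equation}
and hence, by Gronwall's lemma,
\begin{equation}
\sup_{0\le t\le T}\|\varphi(t,v,g)-\varphi(t,v_0,g_0)\|\le e^{LT}\Big(\|v-v_0\|+T\sup_{|s|\le T}\|g(s)-g_0(s)\|\Big).\nonumber
\end{equation}
Since the metric \eqref{eqI_3} induces the compact-open topology, the right-hand side tends to $0$ as $(v,g)\to(v_0,g_0)$ in $\ell_{2}\times H(f)$; combined with continuity of $t\mapsto\varphi(t,v_0,g_0)$ (immediate from the mild identity and the bound on $\Phi$), this yields joint continuity of $\varphi$ at $(t_0,v_0,g_0)$. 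The only genuine input beyond soft arguments is the a priori bound that is \emph{uniform over} $g\in H(f)$, and this is exactly what translation-compactness (Condition (\textbf{C1}), via Lemma \ref{l2.1}) supplies, since it makes $\overline{f(\mathbb R)}$ --- and therefore $\overline{g(\mathbb R)}$ for every $g\in H(f)$ --- bounded by the single constant $C$.
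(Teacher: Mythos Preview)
Your argument is correct. For items (i)--(iii) you follow essentially the same line as the paper, which also obtains (i) from Lemma \ref{lBC1}, Theorem \ref{thLS1.S1}/\ref{th1.1} and Remark \ref{remSL1}, declares (ii) and (iii) evident, and for (iv) simply cites Theorem \ref{thLS1.S1}.

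The genuine difference is in (iv). The paper rests on the abstract local continuity statement of Theorem \ref{thLS1.S1} and leaves implicit the passage from local to global continuity. You instead run a direct Gronwall estimate on the mild formulation, using the contraction property $\|e^{\nu\Lambda t}\|\le 1$ (from $\langle\Lambda u,u\rangle\le 0$ and $\nu\ge 0$) together with the $g$-uniform a priori bound of Lemma \ref{lBC1}. This is a slightly different, more hands-on route: it is self-contained, yields global joint continuity in one step, and in fact gives the stronger conclusion of local Lipschitz dependence on $(v,g)$ uniformly on compact time intervals. The paper's version is terser but relies more heavily on the cited reference; your version trades brevity for an explicit quantitative estimate.
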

\begin{proof}
The first statement of Theorem follows from Lemma \ref{lBC1},
Theorem \ref{thLS1.S1} and Remark \ref{remSL1}.

The second and third statement are evident. The fourth statement
follows from Theorem \ref{thLS1.S1}.
\end{proof}

Let $Y$ be a complete metric space and $(Y,\mathbb R,\sigma)$ be a
dynamical system on $Y$.

\begin{definition}\label{defC1} Recall \cite[Ch.I]{Che_2015} that
$\langle \mathbb B,\varphi, (Y,\mathbb R,\sigma)\rangle$ is said
to be a cocycle over $(Y,\mathbb R,\sigma)$ with the fiber
$\mathfrak B$ if $\varphi$ is a continuous mapping acting from
$\mathbb R_{+}\times \mathfrak B\times Y\to \mathfrak B$
satisfying the following conditions:
\begin{enumerate}
\item $\varphi(0,u,y)=v$ for any $(v,y)\in \mathfrak B\times Y$;
\item
$\varphi(t+\tau,u,y)=\varphi(t,\varphi(\tau,u,t),\sigma(\tau,y))$
for any $(t,\tau \in \mathbb R_{+}$ and $(u,y)\in \mathfrak
B\times Y$.
\end{enumerate}
\end{definition}

\begin{coro}\label{corH1}
Under the conditions of Theorem \ref{th1.1} the equation
(\ref{eq2.3}) (respectively, the family of equations
(\ref{eq2.3g})) generates a cocycle $\langle
\ell_{2},\varphi,(H(f),\mathbb R,\sigma)\rangle$ over the shift
dynamical system $(H(f),\mathbb R,\sigma)$ with the fiber
$\ell_{2}$.
\end{coro}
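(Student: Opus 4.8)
The plan is to assemble the cocycle directly from the objects already in place, since this corollary is essentially a repackaging of the four-part Theorem stated just before Definition~\ref{defC1}. First I would check that $(H(f),\mathbb R,\sigma)$ is a genuine base dynamical system. By the construction in Section~\ref{Sec1}, $(C(\mathbb R,\ell_{2}),\mathbb R,\sigma)$ is the Bebutov shift dynamical system, and $H(f)=\overline{\{f^{h}\mid h\in\mathbb R\}}$ is by definition closed in $C(\mathbb R,\ell_{2})$ and translation invariant, i.e. $\sigma(\tau,H(f))\subseteq H(f)$ for every $\tau\in\mathbb R$; hence the restriction $(H(f),\mathbb R,\sigma)$ is a dynamical system on the complete metric space $H(f)$. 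Under Condition~(\textbf{C1}), Lemma~\ref{l2.1} moreover gives that $H(f)$ is compact, which is the setting in which we want the hull to live.

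Next I would take $\varphi$ to be the solution mapping of the family (\ref{eq2.3g}) furnished by the Theorem preceding Definition~\ref{defC1}: for each $(v,g)\in\ell_{2}\times H(f)$, $\varphi(t,v,g)$ is the unique mild solution of (\ref{eq2.3g}) with $\varphi(0,v,g)=v$, it is defined for all $t\in\mathbb R_{+}$ (global existence coming from the a priori bound of Lemma~\ref{lBC1} together with part~(ii) of Remark~\ref{remSL1}), and the mapping $\varphi:\mathbb R_{+}\times\ell_{2}\times H(f)\to\ell_{2}$ is continuous. Condition~(1) of Definition~\ref{defC1} is then immediate, and condition~(2), the cocycle identity $\varphi(t+\tau,v,g)=\varphi(t,\varphi(\tau,v,g),\sigma(\tau,g))$, is exactly statement~(3) of that Theorem once one recalls $g^{\tau}=\sigma(\tau,g)$; it follows from the standard translation argument that $t\mapsto\varphi(t+\tau,v,g)$ and $t\mapsto\varphi(t,\varphi(\tau,v,g),g^{\tau})$ both satisfy the integral equation (\ref{eqSL3}) with driving term $\sigma(\cdot,g^{\tau})$ and the same initial value $\varphi(\tau,v,g)$, hence coincide by uniqueness (Theorem~\ref{thLS1.S1}).

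Putting these together, $\langle\ell_{2},\varphi,(H(f),\mathbb R,\sigma)\rangle$ meets all the requirements of Definition~\ref{defC1}, which proves the corollary. There is no substantial obstacle remaining: the real work is already contained in Lemma~\ref{lBC1} (the dissipativity-type estimate ruling out finite-time blow-up) and in the application of Theorem~\ref{thLS1.S1} (local well-posedness and continuous dependence on $(t,u,g)$). The only points deserving a line of care are that $H(f)$ is $\sigma$-invariant and complete, so that $(H(f),\mathbb R,\sigma)$ legitimately serves as the base, and that the local Lipschitz and boundedness estimates in Theorem~\ref{th1.1} hold uniformly over the whole hull $H(f)$, not merely at $f$ itself.
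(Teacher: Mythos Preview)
Your proposal is correct and follows the same approach as the paper: the paper's own proof is a single line stating that the corollary follows directly from Theorem~\ref{th1.1} and Definition~\ref{defC1}, and you have simply unpacked this by verifying each clause of Definition~\ref{defC1} using the four-part Theorem immediately preceding it (global existence via Lemma~\ref{lBC1} and Remark~\ref{remSL1}, the initial condition, the cocycle identity via uniqueness, and continuity via Theorem~\ref{thLS1.S1}). Your additional remarks that $(H(f),\mathbb R,\sigma)$ is a legitimate base dynamical system and that the Lipschitz estimates are uniform over the hull are appropriate points of care that the paper leaves implicit.
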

\begin{proof} This statement directly follows from Theorem
\ref{th1.1} and Definition \ref{defC1}.
\end{proof}

\section{Existence of an absorbing set}\label{Sec3}

\begin{theorem}\label{th2.1} Under the Conditions (\textbf{C1})-(\textbf{C3})
there exists a closed ball $B[0,r]:=\{\xi \in \ell_{2}|\ |\xi|\le
r\}$ such that for any bounded subset $B\subset \ell_{2}$ there
exist a positive number $L=L(B)$ such that
$\varphi(t,B,Y)\subseteq B[0,r]$ for any $t\ge L(B)$, where
$\varphi(t,M,Y):=\{\varphi(t,u,y)|\ u\in M,\ y\in Y\}$.
\end{theorem}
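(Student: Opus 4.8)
The plan is to establish a uniform (in $g\in H(f)$ and in initial data from a bounded set) absorbing estimate by the same energy argument used in Lemma~\ref{lBC1}, but now keeping track of the long-time behaviour rather than just finite-time boundedness. Set $y(t):=|\varphi(t,v,g)|^{2}$ for $v\in\ell_{2}$ and $g\in H(f)$. Exactly as in the derivation of (\ref{eqDE1})--(\ref{eqDE2}), using $\langle\Lambda u,u\rangle\le 0$, Condition~(\textbf{C3}) for the nonlinear term, and the Cauchy--Schwarz/Young splitting $|\langle g(t),u\rangle|\le\frac12\lambda|u|^{2}+\frac{1}{2\lambda}|g(t)|^{2}$ together with $\|g(t)\|\le C$ for every $g\in H(f)$ (this bound is uniform over $H(f)$ because by Lemma~\ref{l2.1} and Condition~(\textbf{C1}) the set $\overline{f(\mathbb R)}$ is compact, hence $C:=\sup_{t}\|f(t)\|<\infty$ and translation plus closure do not increase the sup), one obtains the differential inequality
\begin{equation}\label{eqABS1}
y'(t)\le -(\lambda+2\alpha)\,y(t)+\frac{C^{2}}{\lambda}\qquad (t\ge 0).\nonumber
\end{equation}

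Next I would integrate this inequality by Gronwall's lemma to get
\begin{equation}\label{eqABS2}
y(t)\le e^{-(\lambda+2\alpha)t}\,y(0)+\frac{C^{2}}{\lambda(\lambda+2\alpha)}\bigl(1-e^{-(\lambda+2\alpha)t}\bigr)\qquad (t\ge 0).\nonumber
\end{equation}
Fix
\begin{equation}\label{eqABS3}
r^{2}:=\frac{2C^{2}}{\lambda(\lambda+2\alpha)},\nonumber
\end{equation}
so that the "steady-state" part is $\le \tfrac12 r^{2}$ for all $t$. Given a bounded set $B\subset\ell_{2}$ with $\sup_{v\in B}|v|\le\rho$, the term $e^{-(\lambda+2\alpha)t}y(0)\le e^{-(\lambda+2\alpha)t}\rho^{2}$ is $\le\tfrac12 r^{2}$ as soon as $t\ge L(B):=\tfrac{1}{\lambda+2\alpha}\log\!\bigl(\tfrac{2\rho^{2}}{r^{2}}\bigr)$ (and $L(B):=0$ if this is already negative). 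Hence $|\varphi(t,v,g)|^{2}\le r^{2}$ for all $t\ge L(B)$, all $v\in B$ and all $g\in H(f)=Y$, which is precisely $\varphi(t,B,Y)\subseteq B[0,r]$ for $t\ge L(B)$. Note that $L(B)$ depends on $B$ only through $\rho$ and on the fixed data $\lambda,\alpha,C$, and that $r$ is independent of $B$, as required.

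Two points need a word of care. First, the inequality (\ref{eqABS1}) is derived along the maximal interval of existence of the solution; but the right-hand side shows $y(t)$ stays bounded on bounded time intervals, so by Remark~\ref{remSL1}(ii) every solution is global, and the estimate then holds for all $t\in\mathbb R_{+}$ — this is already guaranteed by the theorem following Lemma~\ref{lBC1}. Second, one should make sure the constant $C$ is genuinely uniform over the whole hull $H(f)$; this is the only mildly delicate point, and it follows because membership in $H(f)$ means uniform-on-compacts limits of translates $f^{h}$, and $\|f^{h}(t)\|=\|f(t+h)\|\le C$ for all $t,h$, a bound that passes to pointwise (hence to such) limits. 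The main (and really the only) obstacle is therefore just this bookkeeping of uniformity in $g$; the energy estimate itself is a routine repetition of Lemma~\ref{lBC1}.
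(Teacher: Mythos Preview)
Your proposal is correct and follows essentially the same approach as the paper: derive the differential inequality $y'(t)\le -(\lambda+2\alpha)y(t)+C^{2}/\lambda$ via the energy estimate (using $\langle\Lambda u,u\rangle\le 0$, Condition~(\textbf{C3}), and Young's inequality), apply Gronwall, and read off an absorbing radius and entry time. The only cosmetic differences are that the paper repeats the computation in full rather than citing Lemma~\ref{lBC1}, and chooses the absorbing radius as $R^{2}=1+C^{2}/[\lambda(\lambda+2\alpha)]$ rather than your $r^{2}=2C^{2}/[\lambda(\lambda+2\alpha)]$; your extra remarks on the uniformity of $C$ over $H(f)$ and on global existence are more careful than the paper's but change nothing in substance.
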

\begin{proof}
Let $B=B[0,r]:=\{x\in \ell_{2}|\ ||x||\le r\}$ be a bounded
subset, $v\in B$ and $g\in H(f)$ then we have
\begin{eqnarray}\label{eqAS_1}
&\frac{d}{dt}\left\lVert \varphi(t,v,g)\right\rVert ^2 = 2\nu
\left\langle \Lambda \varphi(t,v,g), \varphi(t,v,g)\right\rangle
+\nonumber \\
& 2 \left\langle \Phi (\varphi(t,v,g)), \varphi(t,v,g)\right\rangle + 2 \left\langle g(t), \varphi(t,v,g)\right\rangle \nonumber \\
&\leq -2\lambda ||\varphi(t,v,g)||^{2}+ 2 \sum_{i \in \mathbb{Z} } \varphi_{i}(t,v,g) f(\varphi_{i}(t,v,g)) +
2 \sum_{i \in \mathbb{Z} } g_{i}(t)\varphi_{i}(t,v,g) \nonumber \\
&\leq -2\lambda ||\varphi(t,v,g)||^{2} -2\alpha \left\lVert \varphi(t,v,g)\right\rVert ^2 + \lambda||\varphi(t,v,g)||^{2}+ \frac{\left\lVert g(t)\right\rVert ^2}{\lambda} \notag \\
&\leq -(2\alpha +\lambda)\left\lVert \varphi(t,v,g)\right\rVert ^2
+ \frac{C^{2}}{\lambda}\nonumber
\end{eqnarray}
where the penultimate step follows from Young's inequality because
$||g(t)|| \leq C:=\sup\{||f(t)||:\ t\in \mathbb R\}$ for any $g
\in H(f)$). Hence, Gronwall's lemma implies that
\[
\left\lVert \varphi(t,v,g) \right\rVert^2 \leq \left\lVert v
\right\rVert^2 e^{-(2\alpha +\lambda)t} + \frac{C ^2}{\lambda
(\lambda +2\alpha)} \left( 1 - e^{-(2\alpha +\lambda) t} \right),
\quad t \ge 0.
\]
Define the closed ball \( Q \) in \( \ell^2 \) by
\[
Q := \left\{ u \in \ell^2 : \left\lVert u \right\rVert^2 \le R^2
:= 1 + \frac{C^2}{\lambda (\lambda +2\alpha)} \right\}
\]
then we have
\begin{equation}\label{eqAS1}
||\varphi(t,v,g)||^{2}\le R^{2}\nonumber
\end{equation}
for any $r>R$ and
\begin{equation}\label{eqAS2}
t\ge L(B):=\frac{1}{\lambda (\lambda +2\alpha)}\ln (r^{2}-R^{2}
+1).\nonumber
\end{equation}
Theorem is proved.
\end{proof}

\section{Asymptotically compactness of the cocycle generated by
the equation (\ref{eq2.3})}\label{Sec4}

Let $\langle \mathfrak B,\varphi, (Y,\mathbb R,\sigma)\rangle$ (or
shortly $\varphi$) be a cocycle over dynamical system $(Y,\mathbb
R,\sigma)$ with the compact phase space $Y$.

Let $A$ and $B$ be two bounded subsets from $\mathfrak B$. Denote
by $\rho(a,b):=|a-b|$ ($a,b\in \mathfrak B$),
$\rho(a,B):=\inf\limits_{b\in B}\rho(a,b)$ and
\begin{equation}\label{eqAC1_1}
\beta(A,B):=\sup\limits_{a\in A}\rho(a,B).\nonumber
\end{equation}

\begin{definition}\label{defAC1} A dynamical system $(X,\mathbb
R_{+},\pi)$ is called asymptotically compact \cite[Ch.I]{Che_2015}
if for any bounded, closed and positively invariant subset
$B\subset X$ there exists a nonempty compact subset $K=K(B)$ such
that
\begin{equation}\label{eqAC1_2}
\lim\limits_{t\to +\infty}\beta(\pi(t,B),K)=0.\nonumber
\end{equation}
\end{definition}

\begin{definition}\label{defAC2} A cocycle $\langle W,\varphi,(Y,\mathbb
T,\sigma)\rangle$is said to be asymptotically compact if the
skew-product dynamical system $(X,\mathbb R_{+},\pi)$ generated by
cocycle $\varphi$ ($X:=W\times Y$, $\pi :=(\varphi,\sigma)$) is
asymptotically compact.
\end{definition}



\begin{definition}\label{defCD1} A cocycle $\langle \ell_{2},\varphi,(Y,\mathbb
R,\sigma)\rangle$ satisfy an asymptotic tails property on the
bounded set $K\subset \ell_{2}$, if for arbitrary positive number
$\varepsilon$ there exist positive numbers $L(\varepsilon)$ and
$k_{\varepsilon}\in \mathbb N$ such that
\begin{equation}\label{eqAT1}
\sum\limits_{|k|\ge k_{\varepsilon}}
|\varphi_{k}(t,v,y)|^{2}<\varepsilon
\end{equation}
for any $t\ge L(\varepsilon)$ and $(v,y)\in K\times Y$.
\end{definition}

\begin{lemma}\label{lCD1} Assume that the skew product dynamical
system $\langle \ell_{2},\varphi,(Y,\mathbb T,\sigma)\rangle$
satisfies the following conditions:
\begin{enumerate}
\item the metric space $Y$ is compact; \item the cocycle $\varphi$
admits a bounded absorbing set $K\subset \ell_{2}$, i.e., for any
bounded subset $B\subset \ell_{2}$ there exists a positive number
$L=L(B)$ such that $\varphi(t,B,Y)$ $\subset $ $K$ for any $t\ge
L$ (or equivalently: $\mathcal K:=K\times Y$ is an absorbing set
for the skew-product dynamical system $(X,\mathbb R_{+},\pi)$
generated by cocycle $\varphi$); \item the cocycle $\varphi$
satisfy an asymptotic tails property on the absorbing set
$\mathcal K \subset X$.
\end{enumerate}

Then the skew-product dynamical system $(X,\mathbb R_{+},\pi)$
generated by the cocycle $\varphi$ is asymptotically compact.
\end{lemma}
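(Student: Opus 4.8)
The plan is to show that under conditions (1)--(3) the skew-product system $(X,\mathbb R_{+},\pi)$ is asymptotically compact by verifying Definition \ref{defAC1} directly. Let $B\subset X$ be a bounded, closed, positively invariant set; since $B\subset \mathcal{B}\times Y$ for some bounded $\mathcal B\subset \ell_2$, and $\mathcal K=K\times Y$ is absorbing, it suffices to find a nonempty compact $\mathcal K_0\subset X$ with $\beta(\pi(t,\mathcal K),\mathcal K_0)\to 0$ as $t\to+\infty$; the conclusion for $B$ then follows from the absorbing property and positive invariance. Because $Y$ is already compact, everything reduces to controlling the $\ell_2$-component $\varphi(t,v,y)$, uniformly over $(v,y)\in K\times Y$.

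The key step is to exhibit, for each $\eps>0$, a finite $\eps$-net for the set $\varphi(t,K,Y)$ valid for all large $t$; by a standard characterization of precompactness in a complete metric space this yields the desired compact attracting set. First I would fix $\eps>0$ and invoke the asymptotic tails property (condition (3)) to obtain $L(\eps)$ and $k_\eps\in\mathbb N$ such that $\sum_{|k|\ge k_\eps}|\varphi_k(t,v,y)|^2<\eps^2/4$ for all $t\ge L(\eps)$ and $(v,y)\in K\times Y$. This means that, up to an $\eps/2$ error, $\varphi(t,v,y)$ lies in the finite-dimensional slice $P_{k_\eps}\ell_2:=\{u\in\ell_2:\ u_k=0\ \text{for}\ |k|\ge k_\eps\}$. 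Next, since $K$ is an absorbing ball, $\|\varphi(t,v,y)\|$ is uniformly bounded (say by $R$) for $t$ large, so the projections $P_{k_\eps}\varphi(t,v,y)$ all lie in the bounded subset $\{w\in P_{k_\eps}\ell_2:\ \|w\|\le R\}$ of the finite-dimensional space $P_{k_\eps}\ell_2$, which is precompact; cover it by finitely many balls of radius $\eps/2$. The union of these finitely many balls (viewed in $\ell_2$) is then a finite $\eps$-net for $\varphi(t,K,Y)$ for all $t\ge L(\eps)$.

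From this one concludes that the set $\Omega:=\bigcap_{s\ge 0}\overline{\bigcup_{t\ge s}\varphi(t,K,Y)}$ is nonempty and compact in $\ell_2$: indeed, the nested closed sets $\overline{\bigcup_{t\ge s}\varphi(t,K,Y)}$ are totally bounded (each admits a finite $\eps$-net for every $\eps$, by the construction above applied with $s\ge L(\eps)$) hence compact, so their intersection is a nonempty compact set. Then $K_0:=\Omega$ (equivalently $\mathcal K_0:=\Omega\times Y$, using compactness of $Y$) satisfies $\beta(\varphi(t,K,Y),K_0)\to 0$; if this failed there would be $\eps_0>0$, $t_n\to+\infty$, and $(v_n,y_n)\in K\times Y$ with $\rho(\varphi(t_n,v_n,y_n),K_0)\ge\eps_0$, yet by total boundedness a subsequence of $\varphi(t_n,v_n,y_n)$ converges to a point which, by definition of $\Omega$, lies in $K_0$ --- a contradiction. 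Finally, lifting back to $X=\ell_2\times Y$ and using that $\mathcal K$ absorbs every bounded set, $\beta(\pi(t,B),\mathcal K_0)\to 0$ for every bounded positively invariant $B$, which is exactly asymptotic compactness of $(X,\mathbb R_{+},\pi)$.

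The main obstacle is the uniform finite-dimensional reduction: one must combine the tail estimate (condition (3)) with the uniform $\ell_2$-bound coming from the absorbing ball (condition (2)) to get a genuinely finite $\eps$-net valid for \emph{all} sufficiently large $t$ simultaneously and for \emph{all} $(v,y)\in K\times Y$; the compactness of $Y$ is used here to absorb the base variable without extra work. Once that uniform net is in hand, the passage to the compact attracting set and the verification of $\beta(\cdot,\cdot)\to 0$ are routine consequences of total boundedness in the complete space $\ell_2$.
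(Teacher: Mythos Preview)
Your approach is correct and takes a genuinely different route from the paper's. The paper argues sequentially: given $t_n\to\infty$ and $x_n=(u_n,y_n)\in B$, it first extracts a \emph{weakly} convergent subsequence $\varphi(t_n,u_n,y_n)\rightharpoonup u$ (using that bounded sets in the Hilbert space $\ell_2$ are weakly precompact), and then upgrades this to strong convergence by combining componentwise convergence on $|i|\le k(\eps)$ with the uniform tail estimate on $|i|>k(\eps)$ --- the classical Bates--Lu--Wang device. You instead bypass the weak topology altogether: the tail estimate lets you reduce, up to an $\eps/2$ error, to a bounded set in the finite-dimensional slice $P_{k_\eps}\ell_2$, which is automatically totally bounded, yielding a finite $\eps$-net for $\varphi(t,K,Y)$ valid for all large $t$. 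Your argument is more elementary and would transfer verbatim to sequence spaces without a Hilbert structure; the paper's approach has the advantage of identifying the limit explicitly as the weak limit.

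One small imprecision worth noting: it is not quite true that each tail set $\overline{\bigcup_{t\ge s}\varphi(t,K,Y)}$ is totally bounded for a \emph{fixed} $s$, since your finite $\eps$-net is only available once $t\ge L(\eps)$, and $L(\eps)$ may exceed $s$ for small $\eps$. What does follow from your construction (by a diagonal argument across $\eps=1/m$) is that every sequence $\varphi(t_n,v_n,y_n)$ with $t_n\to\infty$ and $(v_n,y_n)\in K\times Y$ has a convergent subsequence. This sequential precompactness is exactly what your final contradiction argument invokes, and it is enough to conclude that $\Omega$ is nonempty, compact, and attracts $K$; so the proof goes through once this step is phrased in terms of sequences rather than compactness of the individual tail sets.
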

\begin{proof} Let $B$ be a bounded, closed and positively invariant
subset of $X$. Since the space $Y$ is compact then there exists a
positive number $r>0$ such that $B\subseteq B[0,r]\times Y$.
Consider the sequences $\{x_n\}\subset B$ and $t_n\to +\infty$ as
$n\to \infty$. We will show that the sequence $\{\pi(t_n,x_n)\}$
is precompact in the space $X:=\ell_{2}\times Y$ or equivalently
the sequence $\{\varphi(t_n,u_n,y_n)\}$ is precompact in
$\ell_{2}$ ($x_n=(u_n,y_n)$ and
$\pi(t_n,x_n)=(\varphi(t_n,u_n,y_n),\sigma(t_n,y_n))$ for any
$n\in \mathbb N$) because $\{y_n\}\subset Y$ and the space $Y$ is
compact.

Since the est $B$ is positively invariant then $\pi(t,B)\subseteq
B\subseteq B[0,r]\times Y$ for any $t\ge 0$ and, consequently,
\begin{equation}\label{eqBK1}
\varphi(t,u,y)\in B[0,r] \nonumber
\end{equation}
for any $(u,y)\in B$ and $t\ge 0$. In particular, we have
\begin{equation}\label{BK2}
|\varphi(t_n,u_n,y_n)|\le r \nonumber
\end{equation}
for any $n\in \mathbb N$). Thus the sequence
$\{\varphi(t_n,u_n,y_n)\}$ is weakly precompact in $\ell_{2}$ and,
consequently, we can assume that $\{\varphi(t_n,u_n,y_n\}$ is
weakly convergent, i.e,
\begin{equation}\label{eqWC1}
\varphi(t_n,u_n,y_n)\rightharpoonup u \nonumber
\end{equation}
weakly in $\ell_{2}$.

Let $\varepsilon$ be an arbitrary positive number. Since $K\subset
\ell_{2}$ is an absorbing set for the cocycle $\varphi$ then there
exists a positive number $L_{1}=L_{1}(B[0,r])$ such that
\begin{equation}\label{eqAS_01}
\varphi(t,u,y)\in K
\end{equation}
for any $t\ge L_{1}(B[0,r])$ and $(u,y)\in B[0,r]\times Y$. Let
$n_1\in \mathbb N$ be a number such that $t_n\ge L_{1}$ for any
$n\ge n_1$ and, consequently, from (\ref{eqAS_01}) we obtain
\begin{equation}\label{eqAS_2}
\varphi(t_n,u_n,y_n)\in K
\end{equation}
for any $n\ge n_1$.

We will show that the sequence $\{\varphi(t_n,u_n,y_n)\}$ converge
in the space $\ell_{2}$, that is, for any $\varepsilon >0$ there
exists a number $n(\varepsilon)\in \mathbb N$ such that
\begin{equation}\label{eqAS3}
\|\varphi(t_n,u_n,y_n)-u\|<\varepsilon \nonumber
\end{equation}
for any $n\ge n(\varepsilon)$.

By (\ref{eqAS_2}) we have
\begin{equation}\label{eqAS4}
\varphi(L_{1},u_n,y_n)\in K
\end{equation}
for any $n\in \mathbb N$. Since the cocycle $\varphi$ satisfy an
asymptotic tails on the set $K$ for given $\varepsilon
>0$ there exist $k_{1}(\varepsilon)\in \mathbb N$ and
$L_{1}(\varepsilon)>0$ such that
\begin{equation}\label{eqAS5}
\sum\limits_{|i|\ge
k_{1}(\varepsilon)}|\varphi_{i}(t,\varphi(L_{1},u_n,y_n),\sigma(L_{1},y_n))|^{2}<\varepsilon^{2}/8
\end{equation}
for any $t\ge L_{1}(\varepsilon)$.

Since $t_n\to +\infty$ as $n\to \infty$, there exists
$n_{2}(\varepsilon)\in \mathbb N$ such  that if $n\ge
n_{2}(\varepsilon)$, then $t_n - L_{1}\ge L_{1}(\varepsilon)$, and
hence from (\ref{eqAS5}), we have
\begin{eqnarray}\label{eqAS6}
& \sum\limits_{|i|\ge
k_{1}(\varepsilon)}|\varphi_{i}(t_n,u_n,y_n)|^{2}=
\\
& \sum\limits_{|i|\ge
k_{1}(\varepsilon)}|\varphi_{i}(t_n-L_{1},\varphi(L_{1}u_n,y_n),\sigma(L_1,y_n))|^{2}\le
\varepsilon^{2}/8.\nonumber
\end{eqnarray}

Since $u\in \ell_{2}$ then there exists $k_{2}(\varepsilon)$ such
that
\begin{equation}\label{eqAS7}
\sum\limits_{|i|\ge k_{2}(\varepsilon)}|u_{i}|^{2}\le
\varepsilon^{2}/8.
\end{equation}
Let $k(\varepsilon)=\max\{k_1(\varepsilon),k_{2}(\varepsilon)\}$.
By the weak convergence of $\{\varphi(t_n,u_n,y_n)\}$ we have that
\begin{equation}\label{eqAS8_1}
\varphi_{i}(t_n,u_n,y_n)\to u_{i}\nonumber
\end{equation}
as $n\to \infty$ (for any $|i|\le k(\varepsilon)$), which implies
that there exists $n_{3}(\varepsilon)$ such that for any $n\ge
n_{3}(\varepsilon)$ we obtain
\begin{equation}\label{eqAS8}
\sum\limits_{|i|\le
k(\varepsilon)}|\varphi_{i}(t_n,u_n,y_n)-u_{i}|^{2}\le
\varepsilon^{2}/2
\end{equation}
for any $n\ge n_{3}(\varepsilon).$

Setting
$n(\varepsilon):=\max\{n_{1},n_{2}(\varepsilon),n_{3}(\varepsilon)\}$,
from (\ref{eqAS6})-(\ref{eqAS8}) we get that, for $n\ge
n(\varepsilon)$,
\begin{eqnarray}\label{eqAS9}
& \|\varphi(t_n,u_n,y_n)-u\|^{2}=\sum\limits_{|i|\le
k(\varepsilon)}|\varphi_{i}(t_n,u_n,y_n)-u_{i}|^{2} +\nonumber\\
& \sum\limits_{|i|>
k(\varepsilon)}|\varphi_{i}(t_n,u_n,y_n)-u_{i}|^{2}\le
\varepsilon^{2}/2+2\sum\limits_{|i|\ge
k_{\varepsilon}}(|\varphi_{i}(t_n,u_n,y_n)|^{2}+|u_{i}|^{2})\le
\varepsilon^{2}.\nonumber
\end{eqnarray}
as desired. Hence we obtain $\varphi(t_n,u_n,y_n)$ converges to $u$
in the space $\ell_{2}$. Lemma is completely proved.
\end{proof}

\begin{theorem}\label{thAC1} Under the Conditions
(\textbf{C1})-(\textbf{C3}) the cocycle $\langle
\ell_{2},\varphi,(H(f),\mathbb R,\sigma)\rangle$ generated by the
equation (\ref{eq2.3}) satisfy an asymptotic tails property on the
set $Q$.
\end{theorem}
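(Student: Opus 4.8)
The plan is to use the classical ``tail estimate'' technique of Bates--Lu--Wang: introduce a smooth cut-off function that is $0$ on a central block of indices and $1$ outside a larger block, multiply the equation by the cut-off weighted state, and derive a differential inequality for the weighted tail energy. Concretely, I would fix a smooth $\theta\in C^1(\mathbb R_+,[0,1])$ with $\theta(s)=0$ for $0\le s\le 1$, $\theta(s)=1$ for $s\ge 2$, and $|\theta'(s)|\le C_0$ for some absolute constant $C_0$. For $k\in\mathbb N$ set $\theta_i^{(k)}:=\theta(|i|/k)$ and, for a solution $v(t)=\varphi(t,v_0,g)$ with $v_0\in Q$, $g\in H(f)$, define the tail quantity
\[
\Psi_k(t):=\sum_{i\in\mathbb Z}\theta_i^{(k)}\,|v_i(t)|^2 .
\]
The goal is to show $\Psi_k(t)<\varepsilon$ for all $k\ge k_\varepsilon$ and all $t\ge L(\varepsilon)$, uniformly in $v_0\in Q$ and $g\in H(f)$; since $\{|i|\ge 2k\}$ is contained in the support of $\theta^{(k)}$ where the weight equals $1$, this gives $\sum_{|i|\ge 2k}|v_i(t)|^2\le\Psi_k(t)<\varepsilon$, which is exactly \eqref{eqAT1} (after renaming $2k_\varepsilon$ as the threshold).

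The main computation is to bound $\frac{d}{dt}\Psi_k(t)$. Differentiating and substituting \eqref{eq2.3g} component-wise yields four groups of terms: the diffusion term $2\nu\sum_i\theta_i^{(k)}(v_{i-1}-2v_i+v_{i+1})v_i$, the linear term $-2\lambda\sum_i\theta_i^{(k)}|v_i|^2=-2\lambda\Psi_k$, the nonlinear term $2\sum_i\theta_i^{(k)}F(v_i)v_i\le -2\alpha\sum_i\theta_i^{(k)}|v_i|^2=-2\alpha\Psi_k$ by Condition (\textbf{C3}), and the forcing term $2\sum_i\theta_i^{(k)}g_i(t)v_i$. For the diffusion term one performs an Abel (discrete integration by parts) rearrangement; the ``diagonal'' part produces $-2\nu\sum_i\theta_i^{(k)}|(D^+v)_i|^2\le 0$, and the remaining commutator part is controlled by $|\theta_{i+1}^{(k)}-\theta_i^{(k)}|\le C_0/k$, giving a bound of the form $\tfrac{C_1\nu}{k}\|v\|^2$. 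For the forcing term, Young's inequality gives $2\sum_i\theta_i^{(k)}g_iv_i\le (\alpha+\lambda)\Psi_k+\tfrac{1}{\alpha+\lambda}\sum_i\theta_i^{(k)}|g_i(t)|^2$. Putting this together,
\[
\frac{d}{dt}\Psi_k(t)\le -(\lambda+2\alpha)\Psi_k(t)+\frac{C_1\nu}{k}\|v(t)\|^2+\frac{1}{\alpha+\lambda}\sum_{|i|\ge k}|g_i(t)|^2 .
\]
Since $v_0\in Q$ and $Q$ is absorbing (Theorem \ref{th2.1}) and positively invariant up to the radius $R$, we have $\|v(t)\|^2\le R^2$ for all $t\ge 0$; and since $g\in H(f)$ with $f$ translation-compact, $\overline{f(\mathbb R)}$ is compact in $\ell_2$ (Lemma \ref{l2.1}), so $\sup_{t\in\mathbb R,\,g\in H(f)}\sum_{|i|\ge k}|g_i(t)|^2\to 0$ as $k\to\infty$; call this quantity $\delta(k)$. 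Thus the inhomogeneous term is bounded by $\tfrac{C_1\nu R^2}{k}+\tfrac{\delta(k)}{\alpha+\lambda}=:\mu(k)$, with $\mu(k)\to 0$.

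Finally, Gronwall's inequality applied to the differential inequality gives
\[
\Psi_k(t)\le e^{-(\lambda+2\alpha)t}\Psi_k(0)+\frac{\mu(k)}{\lambda+2\alpha}\bigl(1-e^{-(\lambda+2\alpha)t}\bigr)\le R^2 e^{-(\lambda+2\alpha)t}+\frac{\mu(k)}{\lambda+2\alpha},
\]
using $\Psi_k(0)\le\|v_0\|^2\le R^2$. Given $\varepsilon>0$, first choose $k_\varepsilon$ so large that $\mu(k)/(\lambda+2\alpha)<\varepsilon/2$ for all $k\ge k_\varepsilon$, then choose $L(\varepsilon)$ so large that $R^2 e^{-(\lambda+2\alpha)t}<\varepsilon/2$ for all $t\ge L(\varepsilon)$; the bounds are uniform in $v_0\in Q$ and $g\in H(f)$, which is precisely the asymptotic tails property on $Q$. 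The step I expect to be the main obstacle is making the commutator estimate for the discrete diffusion term fully rigorous while keeping the constant $C_1$ genuinely independent of $k$ and of the solution — this requires the careful Abel-summation bookkeeping around the two transition zones $k\le|i|\le 2k$ where $\theta'$ is supported, together with the observation that $\|(D^\pm v)(t)\|\le 2\|v(t)\|\le 2R$ is itself uniformly bounded on the absorbing set.
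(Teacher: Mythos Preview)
Your proposal is correct and follows essentially the same route as the paper's own proof: the Bates--Lu--Wang cut-off $\theta(|i|/k)$, the weighted tail energy, the Abel-summation commutator estimate for the discrete Laplacian (yielding an $O(1/k)$ error times $\|v\|^2$), Condition~(\textbf{C3}) for the nonlinearity, Young's inequality for the forcing, the uniform tail decay of $g\in H(f)$ via compactness of $\overline{f(\mathbb R)}$ in $\ell_2$ (Lemma~\ref{l2.1}), and Gronwall. The only cosmetic differences are that you retain the $-2\lambda\Psi_k$ term and thus obtain the sharper decay rate $\lambda+2\alpha$ (the paper uses only $\alpha$), and your Young-inequality parameter is $\alpha+\lambda$ rather than $\alpha$; neither affects the argument, and the commutator bookkeeping you flag as the main obstacle is indeed routine and carried out explicitly in the paper with constant $4C_0$.
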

\begin{proof}
We will prove this statement using the ideas and methods
elaborated in the work \cite{BLW_2001} (see also
\cite[Ch.2.3]{HK_2023}). Consider a smooth function \( \xi :
\mathbb{R}^+ \to [0, 1] \) satisfying
\[
\xi(s) =
\begin{cases}
0, & 0 \le s \le 1, \\
\in [0, 1], & 1 \le s \le 2, \\
1, & s \ge 2
\end{cases}
\]
and note that there exists a constant $C_0$ such that
$|\xi'(s)| \le C_0$ for all \( s \ge 0 \). Then for a fixed \( k
\in \mathbb{N} \) (its value will be specified later), define
\[
\xi_k(s) = \xi \left( \frac{s}{k} \right) \quad \text{for all} \quad s \in \mathbb{R_+}.
\]

Given \( u \in C^{1}(\mathbb R_{+},\ell_{2}) \), define \( v \in
C^{1}(\mathbb R_{+},\ell_{2}) \) componentwise as
\[
v_i (t):= \xi_k( |i| ) u_i(t) \quad \text{for} \quad i \in
\mathbb{Z}\ \ (\forall \ t\in \mathbb R_{+}).
\]

Note that
\begin{equation}\label{eqNT1}
\langle u(t),v(t)\rangle =\sum\limits_{i\in \mathbb
Z}\xi_{k}(|i|)|u_{i}(t)|^{2}
\end{equation}
and
\begin{equation}\label{eqNT2}
\frac{d\langle u(t),v(t)\rangle}{dt}=2\langle
\frac{du(t)}{dt},v(t)\rangle
\end{equation}
for any $t\in \mathbb R_{+}$.

Taking the inner product of equation (\ref{eq2.3g}) with \(
\mathbf{v}(t) \) gives
\[
\frac{d}{dt} \langle \mathbf{u}(t), \mathbf{v}(t) \rangle + \nu
\langle \mathrm{D}^+ \mathbf{u}(t), \mathrm{D}^+ \mathbf{v}(t)
\rangle = \langle \Phi(\mathbf{u}(t)), \mathbf{v}(t) \rangle +
\langle \mathbf{g(t)}, \mathbf{v}(t) \rangle,
\]

that is
\begin{equation}\label{20}
\frac{d}{dt} \sum_{i \in \mathbb{Z}} \xi_k( |i| ) |u_i|^2 + 2 \nu
\langle \mathrm{D}^+ \mathbf{u}, \mathrm{D}^+ \mathbf{v} \rangle =
2 \sum_{i \in \mathbb{Z}} \xi_k( |i| ) u_i f(u_i) + 2 \sum_{i \in
\mathbb{Z}} \xi_k( |i| ) g_{i}(t)u_i
\end{equation}

Each term in the equation (\ref{20}) will now be estimated. First,
\[
\left\langle \mathrm{D}^+ \mathbf{u}, \mathrm{D}^+ \mathbf{v} \right\rangle = \sum_{i \in \mathbb{Z}} (u_{i+1} - u_i)(v_{i+1} - v_i)
\]
\[
= \sum_{i \in \mathbb{Z}} (u_{i+1} - u_i) \left[ \left( \xi_k(|i+1|) - \xi_k(|i|) \right) u_{i+1} + \xi_k(|i|)(u_{i+1} - u_i) \right]
\]
\[
= \sum_{i \in \mathbb{Z}} \left( \xi_k(|i+1|) - \xi_k(|i|) \right) (u_{i+1} - u_i) u_{i+1} + \sum_{i \in \mathbb{Z}} \xi_k(|i|) (u_{i+1} - u_i)^2
\]
\[
\ge \sum_{i \in \mathbb{Z}} \left( \xi_k(|i+1|) - \xi_k(|i|) \right) (u_{i+1} - u_i) u_{i+1}.
\]
Since
\[
\left| \sum_{i \in \mathbb{Z}} \left( \xi_k(|i+1|) - \xi_k(|i|) \right) (u_{i+1} - u_i) u_{i+1} \right|
\le \sum_{i \in \mathbb{Z}} \frac{1}{k} |\xi'(s_i)| \cdot |u_{i+1} - u_i| \cdot |u_{i+1}|,
\]
for some \( s_i \) between \( |i| \) and \( |i+1| \), and
\[
\sum_{i \in \mathbb{Z}} |\xi'(s_i)| \, |u_{i+1} - u_i| \, |u_{i+1}|
\le C_0 \sum_{i \in \mathbb{Z}} \left( |u_{i+1}|^2 + |u_i||u_{i+1}| \right) \le 4 C_0 \left\| \mathbf{u} \right\|^2.
\]

Then it follows that for all \( \mathbf{u} \in Q \) and \( \mathbf{v} \in \ell^2 \) defined componentwise as \( v_i := \xi_k(|i|) u_i \), for \( i \in \mathbb{Z} \),
\begin{equation}\label{In20}
\left\langle \mathrm{D}^+ \mathbf{u}, \mathrm{D}^+ \mathbf{v} \right\rangle
\ge - \frac{4 C_0 \|Q\|^2}{k}.
\end{equation}
where \( \|Q\| := \sup_{\mathbf{u} \in Q} \left\| \mathbf{u}
\right\| \). On the other hand, by Condition (\textbf{C3}),
\[
2 \sum_{i \in \mathbb{Z}} \xi_k(|i|) u_i f(u_i) \le -2 \alpha \sum_{i \in \mathbb{Z}} \xi_k(|i|) |u_i|^2
\]

and by Young's inequality
\[
2 \sum_{i \in \mathbb{Z}} \xi_k(|i|) g_i u_i \le
\alpha \sum_{i \in \mathbb{Z}} \xi_k(|i|) |u_i|^2 +
\frac{1}{\alpha} \sum_{i \in \mathbb{Z}} \xi_k(|i|) |g_i|^2.
\]

Thus
\begin{equation}\label{22}
2 \sum_{i \in \mathbb{Z}} \xi_k(|i|) u_i f(u_i) + 2 \sum_{i \in
\mathbb{Z}} \xi_k(|i|) g_i(t) u_i \le -\alpha \sum_{i \in
\mathbb{Z}} \xi_k(|i|) |u_i|^2 + \frac{1}{\alpha} \sum_{|i| \ge k}
|g_i|^2
\end{equation}

Using the estimates (\ref{In20}) and (\ref{22}) in the equation (\ref{20}) gives
\begin{equation}\label{EqCDN1}
\frac{d}{dt} \sum_{i \in \mathbb{Z}} \xi_k(|i|) |u_i|^2 + \alpha
\sum_{i \in \mathbb{Z}} \xi_k(|i|) |u_i|^2 \le \nu \frac{4 C_0 \|
Q \|^2}{k} + \frac{1}{\alpha} \sum_{|i| \ge k} |g_i|^2
\end{equation}

Since $g\in H(f)$ and the set $H(f)$ is a compact subset in the
space $C(\mathbb R,\ell_{2})$ then by Lemma \ref{l2.1} the set
$\overline{f(\mathbb R)}$ is a compact subset of $\ell_{2}$. In
particular for any $\varepsilon >0$ there exists a natural number
$k(\varepsilon)$ such that
\begin{equation}\label{eqDC2}
\sum\limits_{|i|\ge k(\varepsilon)}|v_i|^{2}<\varepsilon
\end{equation}
for any $v\in \overline{f(\mathbb R)}$. Note that $g\in H(f)$ and,
consequently,
\begin{equation}\label{eqDC3}
\overline{g(\mathbb R)}\subseteq \overline{f(\mathbb R)}.
\end{equation}
From (\ref{eqDC2}) and (\ref{eqDC3}) we obtain
\begin{equation}\label{eqDC4}
\sum\limits_{|i|\ge k(\varepsilon)}|g_i(t)|^{2}<\varepsilon
\nonumber
\end{equation}
for any $g\in H(f)$ and $t\in \mathbb R$.

Note that $g(t)\in \overline{g(\mathbb R)}\subseteq
\overline{f(\mathbb R)}$ and, consequently, for every \(
\varepsilon
> 0 \), there exists $k_{\varepsilon}$ such that
\[
\nu \frac{4C_0 \|Q\|^2}{k} + \frac{1}{\alpha} \sum_{|i| \ge k}
|g_{i}(t)|^2 \le \varepsilon, \quad k \ge k(\varepsilon)
\]
for any $g\in H(f)$ and $t\in \mathbb R$.

The inequality (\ref{EqCDN1}) along with the relation above give
\[
\frac{d}{dt} \sum_{i \in \mathbb{Z}} \xi_k(|i|) |u_i|^2 + \alpha
\sum_{i \in \mathbb{Z}} \xi_k(|i|) |u_i|^2 \le \varepsilon
\]
for any $g\in H(f)$ and $t\in \mathbb R$.

Then, Gronwall's lemma implies that
\[
\sum_{i \in \mathbb{Z}} \xi_k(|i|) |\varphi(t,v,g)_i(t,
\mathbf{u}_o)|^2 \le e^{-\alpha t} \sum_{i \in \mathbb{Z}}
\xi_k(|i|) |v_{i}|^2 + \frac{\varepsilon}{\alpha} \le e^{-\alpha
t} \|v\|^2 + \frac{\varepsilon}{\alpha}.
\]

Hence for every \( v \in Q \),
\[
\sum_{i \in \mathbb{Z}} \xi_k(|i|) |\varphi(t,v,g)_{i}|^2 \le
e^{-\alpha t} \| Q \|^2 + \frac{\varepsilon}{\alpha}.
\]
and therefore
\[
\sum_{i \in \mathbb{Z}} \xi_k(|i|) \left| u_i(t, \mathbf{u}_o) \right|^2 \le \frac{2\varepsilon}{\alpha}, \quad \text{for } t \ge T(\varepsilon) := \frac{1}{\alpha} \ln \frac{\alpha \| Q \|^2}{\varepsilon}.
\]
This means that the cocycle $\langle \ell_{2},\varphi,(Y,\mathbb
R,\sigma)\rangle$ is asymptotic tails on the absorbing set $Q$.
Theorem is proved.
\end{proof}

\section{Compact global attractors}\label{Sec5}

\begin{definition}\label{defCGA0_1} A family $\{I_{y}|\ y\in Y\}$ of
compact subsets $I_{y}$ of $\mathfrak B$ is said to be a compact
global attractor for the cocycle $\langle \mathfrak
B,\varphi,(Y,\mathbb R,\sigma)\rangle$ if the following conditions
are fulfilled:
\begin{enumerate}
\item the set
\begin{equation}\label{eqCGA1}
\mathcal I :=\bigcup \{I_{y}|\ y\in Y\}\nonumber
\end{equation}
is precompact; \item the family of subsets $\{I_{y}|\ y\in Y\}$ is
invariant, i.e., $\varphi(t,I_{y},y)=I_{\sigma(t,y)}$ for any
$(t,y)\in \mathbb R_{+}\times Y$; \item
\begin{equation}\label{eqCGA2}
\lim\limits_{t\to +\infty}\sup\limits_{y\in
Y}\beta(\varphi(t,M,y),\mathcal I)=0\nonumber
\end{equation}
\end{enumerate}
for any compact subset $M$ from $\mathfrak B$.
\end{definition}

\begin{definition}\label{defCGA1} A cocycle $\varphi$ is said to be
dissipative if there exists a bounded subset $K\subset \mathfrak
B$ such that for any bounded subset $B\subset \ \mathfrak B$ there
exists a positive number $L=L(B)$ such that
$\varphi(t,B,Y)\subseteq K$ for any $t\ge L(B)$, where
$\varphi(t,B,Y):=\{\varphi(t,u,y)|\ (u,y)\in B\times Y\}$.
\end{definition}

\begin{theorem}\label{thCGA1} \cite[Ch.II]{Che_2024} Assume that the metric space $Y$ is
compact and the cocycle $\langle \mathfrak B,\varphi,(Y,\mathbb
R,\sigma)\rangle$ is dissipative and asymptotically compact.

Then the cocycle $\varphi$ has a compact global attractor.
\end{theorem}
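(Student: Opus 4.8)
The plan is to pass from the cocycle to its skew-product semiflow, apply the classical theory of global attractors for dissipative asymptotically compact semigroups to that semiflow, and then recover the required family $\{I_y\}_{y\in Y}$ fiberwise.

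First I would introduce the skew-product semiflow $(X,\mathbb R_{+},\pi)$ with $X:=\mathfrak B\times Y$ and $\pi(t,(u,y)):=(\varphi(t,u,y),\sigma(t,y))$. Since $Y$ is compact, a subset of $X$ is bounded exactly when its projection onto $\mathfrak B$ is bounded; hence the bounded absorbing set $K\subset\mathfrak B$ provided by dissipativity yields a bounded absorbing set $\mathcal K:=\overline K\times Y$ for $\pi$, and by Definition~\ref{defAC2} the semiflow $\pi$ is asymptotically compact. Since the definition of asymptotic compactness (Definition~\ref{defAC1}) concerns sets that are also positively invariant, I would replace $\mathcal K$ by $\mathcal B:=\overline{\bigcup_{t\ge 0}\pi(t,\mathcal K)}$. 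This $\mathcal B$ is closed and positively invariant by construction, and it is bounded because $\pi(t,\mathcal K)\subseteq\mathcal K$ for all $t\ge L(\mathcal K)$ while $\bigcup_{0\le t\le L(\mathcal K)}\pi(t,\mathcal K)$ is bounded (in the concrete situation of this paper the latter is exactly the content of Lemma~\ref{lBC1}; abstractly it is the standing requirement that $\pi$ carries bounded sets into bounded sets over finite time intervals). Thus $\mathcal B$ is a bounded, closed, positively invariant absorbing set for $\pi$.

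Applying asymptotic compactness to $\mathcal B$ produces a nonempty compact set $\mathcal K_1\subset X$ with $\beta(\pi(t,\mathcal B),\mathcal K_1)\to 0$ as $t\to+\infty$. From here the classical argument for asymptotically compact semiflows (see, e.g., \cite[Ch.II]{Che_2024}) shows that $\mathcal A:=\omega(\mathcal B)=\bigcap_{s\ge 0}\overline{\bigcup_{t\ge s}\pi(t,\mathcal B)}$ is nonempty, compact, strictly invariant ($\pi(t,\mathcal A)=\mathcal A$ for every $t\ge 0$) and attracts $\mathcal B$; since $\mathcal B$ absorbs every bounded subset of $X$, the set $\mathcal A$ then attracts every bounded $B\subset X$, i.e.\ $\beta(\pi(t,B),\mathcal A)\to 0$.

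Finally I would read off the cocycle attractor as the family of fibers of $\mathcal A$: put $I_y:=\{u\in\mathfrak B:\ (u,y)\in\mathcal A\}$. Each $I_y$ is compact, being closed in the compact set $\mathcal A$, and $\mathcal I=\bigcup_{y\in Y}I_y=\operatorname{pr}_{\mathfrak B}\mathcal A$ is compact, which is condition~(i) of Definition~\ref{defCGA0_1}. For condition~(ii), the identity $\pi(t,\mathcal A)=\mathcal A$ together with $\pi(t,(u,y))=(\varphi(t,u,y),\sigma(t,y))$ gives $\varphi(t,I_y,y)\subseteq I_{\sigma(t,y)}$ at once; for the reverse inclusion one uses that $\sigma(t,\cdot)$ is a homeomorphism of $Y$ (it is the time-$t$ map of the flow $\sigma$), so that any $(u,z)\in\mathcal A$ with $\pi(t,(u,z))=(w,\sigma(t,y))$ must satisfy $z=y$, whence $w=\varphi(t,u,y)$ with $u\in I_y$. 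Condition~(iii) follows by applying the attraction property of $\mathcal A$ to the bounded set $M\times Y$ for compact $M\subset\mathfrak B$ ($Y$ compact): $\sup_{y\in Y}\beta(\varphi(t,M,y),\mathcal I)\le\beta(\pi(t,M\times Y),\mathcal A)\to 0$. If one also wants each $I_y$ nonempty, this follows by a pullback argument: for $u_0\in\overline K$ and $y_n:=\sigma(-n,y)$ the points $\pi(n,(u_0,y_n))=(\varphi(n,u_0,y_n),y)$ lie in $\mathcal B$, are attracted to $\mathcal A$, and by asymptotic compactness possess a convergent subsequence whose limit lies in $I_y$. The two steps I expect to require care are the production of a \emph{bounded} positively invariant absorbing set (which needs finite-time boundedness of orbits, i.e.\ Lemma~\ref{lBC1} in the concrete case) and the fiberwise reverse inclusion in~(ii), which relies on the invertibility of the base flow $\sigma$.
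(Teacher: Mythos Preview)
The paper does not give its own proof of this theorem: it is stated with the citation \cite[Ch.II]{Che_2024} and used as a black box (the subsequent Theorem~\ref{thCGA2} merely invokes it together with Theorems~\ref{th2.1} and~\ref{thAC1}). So there is no argument in the text to compare against.

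Your sketch is the standard route---pass to the skew-product semiflow, build a compact invariant $\omega$-limit set of a bounded positively invariant absorbing set, then slice it fiberwise---and it is essentially correct and is indeed the argument one finds in the cited reference. Your two flagged ``care points'' are the right ones. The reverse inclusion in (ii) is fine exactly as you wrote it, since here $(Y,\mathbb R,\sigma)$ is a flow and $\sigma(t,\cdot)$ is a homeomorphism. The only genuinely delicate step in the abstract setting is the one you already isolate: obtaining a \emph{bounded} closed positively invariant absorbing set, which in turn needs that $\pi$ maps bounded sets to bounded sets over finite time intervals. This is not a consequence of Definitions~\ref{defAC1}--\ref{defAC2} and~\ref{defCGA1} alone; in the concrete lattice setting it is supplied by Lemma~\ref{lBC1}, and in the abstract theorem it is part of the standing framework in \cite{Che_2024}. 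With that caveat noted, your proposal would serve as a perfectly adequate proof of the cited result.
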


\begin{theorem}\label{thCGA2} Under the Conditions
(\textbf{C1})-(\textbf{C3}) the equation (\ref{eq2.3}) (the
cocycle $\varphi$ generated by the equation (\ref{eq2.3})) has a
compact global attractor $\{I_{g}|\ g\in H(f)\}$.
\end{theorem}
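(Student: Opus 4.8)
The plan is to deduce the statement from the abstract existence result Theorem \ref{thCGA1}, whose hypotheses require that the base space be compact and that the cocycle be both dissipative and asymptotically compact. Thus the proof reduces to checking these three facts for the cocycle $\langle \ell_{2},\varphi,(H(f),\mathbb R,\sigma)\rangle$ produced by Corollary \ref{corH1}.

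First I would observe that $Y:=H(f)$ is compact: by Condition (\textbf{C1}) the function $f$ is translation-compact, so $H(f)=\overline{\{f^{h}\ |\ h\in\mathbb R\}}$ is by definition a compact subset of $C(\mathbb R,\ell_{2})$, and it is shift-invariant, hence $(H(f),\mathbb R,\sigma)$ is a compact dynamical system. Next, dissipativity is the content of Theorem \ref{th2.1} (and its proof): the closed ball $Q=\{u\in\ell_{2}:\|u\|^{2}\le R^{2}\}$ with $R^{2}=1+C^{2}/(\lambda(\lambda+2\alpha))$ is a bounded absorbing set, i.e.\ for every bounded $B\subset\ell_{2}$ there is $L=L(B)$ with $\varphi(t,B,Y)\subseteq Q$ for all $t\ge L$, which is exactly Definition \ref{defCGA1}.

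The remaining and essential point is asymptotic compactness, and here I would invoke Lemma \ref{lCD1}, which delivers asymptotic compactness of the skew-product system $(X,\mathbb R_{+},\pi)$ ($X=\ell_{2}\times H(f)$, $\pi=(\varphi,\sigma)$) from three ingredients: compactness of $Y$ (just established), existence of a bounded absorbing set (the ball $Q$, from Theorem \ref{th2.1}), and the asymptotic tails property of the cocycle on that absorbing set. The third ingredient is precisely Theorem \ref{thAC1}, which states that under Conditions (\textbf{C1})--(\textbf{C3}) the cocycle satisfies the asymptotic tails property on $Q$. The one point that must be checked with some care is that the absorbing set of Theorem \ref{th2.1} and the set on which the tail estimate of Theorem \ref{thAC1} is valid are the same ball $Q$; once this is noted, Lemma \ref{lCD1} applies and the skew-product system is asymptotically compact, so by Definition \ref{defAC2} the cocycle $\varphi$ is asymptotically compact.

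Having verified that $Y$ is compact and that $\varphi$ is dissipative and asymptotically compact, Theorem \ref{thCGA1} applies and produces a compact global attractor $\{I_{g}\ |\ g\in H(f)\}$ for $\varphi$. I do not anticipate a genuine difficulty at this stage: all the analytic work — the energy/Gronwall absorbing estimate of Section \ref{Sec3} and the cut-off ``tail'' estimate of Section \ref{Sec4} controlling the far coordinates in $\ell_{2}$ — has already been done, and what remains is the bookkeeping of matching the absorbing ball to the set on which the tails property was proved and feeding the pieces into the abstract theorem with the correctly identified skew-product system. If anything is to be singled out as the real obstacle it is the tail estimate itself (Theorem \ref{thAC1}), the standard device by which the lack of compactness of the phase space is circumvented for lattice systems, but that has been carried out above.
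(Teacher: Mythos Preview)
Your proposal is correct and follows essentially the same approach as the paper, which simply cites Theorems \ref{th2.1}, \ref{thAC1} and \ref{thCGA1}. If anything, you are slightly more careful than the paper in that you explicitly insert Lemma \ref{lCD1} to pass from the asymptotic tails property of Theorem \ref{thAC1} to the asymptotic compactness required by Theorem \ref{thCGA1}, and you spell out why $H(f)$ is compact; the paper leaves these connective steps implicit.
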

\begin{proof} This statement follows from Theorems \ref{th2.1},
\ref{thAC1} and \ref{thCGA1}.
\end{proof}

Below we give an example which illustrate our general results.

\begin{example}\label{exGA1} Let $\{omega_{i}\}_{i\in \mathbb Z}$ be a
sequence of real numbers. For every $i\in \mathbb Z$ we define a
function $f_{i}\in C(\mathbb R,\mathbb R)$ by the equality
\begin{equation}\label{eqGA1}
f_{i}(t):=\frac{\sin(\omega_{i}t+\ln(1+t^{2}))}{2^{|i|}} \nonumber
\end{equation}
for all $t\in \mathbb R$.

Note that the functions $f_{i}$ ($i\in \mathbb Z$) possesses the
following properties:
\begin{enumerate}
\item
\begin{equation}\label{eqGA_0}
|f_{i}(t)|\le \frac{1}{2^{|i|}}
\end{equation}
for all $t\in \mathbb R$ and $i\in \mathbb Z$; \item
\begin{equation}\label{eqGA_1}
|f_{i}'(t)|\le (2+|\omega_{i}|)
\end{equation}
for all $t\in \mathbb R$ and $i\in \mathbb Z$.
\end{enumerate}

\begin{lemma}\label{lGA_1} For every $i\in \mathbb Z$ the
function $f_{i}$ is bounded and uniformly continuous on $\mathbb
R$.
\end{lemma}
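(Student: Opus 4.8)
The plan is to verify the two assertions separately, each reducing to the elementary estimates already recorded in (\ref{eqGA_0}) and (\ref{eqGA_1}). For the boundedness I would simply observe that $|\sin(\cdot)|\le 1$, so that $|f_i(t)|\le 2^{-|i|}\le 1$ for all $t\in\mathbb R$; this is precisely inequality (\ref{eqGA_0}), and in particular $f_i$ is bounded on $\mathbb R$.

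For the uniform continuity the idea is to show that $f_i$ is globally Lipschitz. Since $t\mapsto \omega_i t+\ln(1+t^2)$ is $C^1$, the function $f_i$ is differentiable with
\[
f_i'(t)=\frac{\cos\bigl(\omega_i t+\ln(1+t^2)\bigr)}{2^{|i|}}\Bigl(\omega_i+\frac{2t}{1+t^2}\Bigr).
\]
Here I would use $|\cos(\cdot)|\le 1$, the elementary inequality $\left|\frac{2t}{1+t^2}\right|\le 1$ (which follows from $(|t|-1)^2\ge 0$), and $2^{-|i|}\le 1$ to get $|f_i'(t)|\le |\omega_i|+1\le 2+|\omega_i|$ for every $t\in\mathbb R$, i.e. inequality (\ref{eqGA_1}). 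The mean value theorem then yields $|f_i(t)-f_i(s)|\le (2+|\omega_i|)\,|t-s|$ for all $s,t\in\mathbb R$, so $f_i$ is Lipschitz, hence uniformly continuous on $\mathbb R$.

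There is essentially no obstacle: the only point requiring a short computation is the bound $\left|\frac{2t}{1+t^2}\right|\le 1$, and everything else is a direct application of $|\sin(\cdot)|,|\cos(\cdot)|\le 1$ together with the mean value theorem. One could alternatively invoke Lemma \ref{l2.1} after showing $f=(f_i)_{i\in\mathbb Z}$ is translation-compact, but for a single scalar component the Lipschitz argument is the most direct route and also produces the explicit modulus of continuity.
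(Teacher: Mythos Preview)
Your proof is correct and follows exactly the approach of the paper, which simply states that the lemma ``directly follows from (\ref{eqGA_0}) and (\ref{eqGA_1}).'' You have merely supplied the elementary details (the derivative computation, the bound $|2t/(1+t^2)|\le 1$, and the passage from a bounded derivative to Lipschitz continuity via the mean value theorem) that the paper leaves implicit.
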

\begin{proof} This statement directly follows from (\ref{eqGA_0})
and (\ref{eqGA_1}).
\end{proof}

\begin{lemma}\label{lGA1} The following statement hold:
\begin{enumerate}
\item $f(t)\in \ell_{2}$, where $f(t):=(f_{i}(t))_{i\in \mathbb
Z}$; \item for every $\varepsilon >0$ there exists a number
$n(\varepsilon)\in \mathbb N$ such that
\begin{equation}\label{eqGA2}
\sum\limits_{|i|\ge
n(\varepsilon)}|f_{i}(t)|^{2}<\frac{\varepsilon^{2}}{4}
\end{equation}
for all $t\in \mathbb R$.
\end{enumerate}
\end{lemma}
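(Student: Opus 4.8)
The plan is to prove Lemma~\ref{lGA1} by direct estimation using the pointwise bound \eqref{eqGA_0}, which already contains all the decay we need. The key observation is that the specific oscillatory structure of the functions $f_i$ is irrelevant for both statements: only the envelope $|f_i(t)|\le 2^{-|i|}$ matters.

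For statement (i), I would fix $t\in\mathbb R$ and compute $\|f(t)\|^2=\sum_{i\in\mathbb Z}|f_i(t)|^2\le\sum_{i\in\mathbb Z}4^{-|i|}$ by \eqref{eqGA_0}. The right-hand side is $1+2\sum_{i\ge 1}4^{-i}=1+2\cdot\frac{1/4}{1-1/4}=\frac{5}{3}<+\infty$, so $f(t)\in\ell_2$ for every $t$. (One may also note that the value of the bound is independent of $t$, which is consistent with the constant $C$ appearing in the earlier sections, though that is not asked here.)

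For statement (ii), given $\varepsilon>0$ I would again use \eqref{eqGA_0} to write, for any $n\in\mathbb N$,
\[
\sum_{|i|\ge n}|f_i(t)|^2\le\sum_{|i|\ge n}4^{-|i|}=2\sum_{i\ge n}4^{-i}=2\cdot\frac{4^{-n}}{1-1/4}=\frac{8}{3}\cdot 4^{-n}.
\]
Since this bound tends to $0$ as $n\to\infty$ and is uniform in $t\in\mathbb R$, one chooses $n(\varepsilon)\in\mathbb N$ so large that $\frac{8}{3}\cdot 4^{-n(\varepsilon)}<\varepsilon^2/4$ — for instance any $n(\varepsilon)>\log_4\!\big(\tfrac{32}{3\varepsilon^2}\big)$ works — and then \eqref{eqGA2} holds for all $t\in\mathbb R$.

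There is essentially no obstacle here: both parts are immediate consequences of the geometric-series bound supplied by \eqref{eqGA_0}, and the uniformity in $t$ comes for free because that bound is itself $t$-independent. The only minor point to be careful about is bookkeeping of the constant when converting $\sum_{|i|\ge n}4^{-|i|}$ into a one-sided geometric series, and making sure the chosen $n(\varepsilon)$ is rounded up to a natural number. This lemma will presumably be combined with \eqref{eqGA_1} (giving uniform continuity of each $f_i$, hence of $f$ via the tail bound) to verify Conditions~(\textbf{C1})--(\textbf{C3}) for the example, so that Theorem~\ref{thCGA2} applies.
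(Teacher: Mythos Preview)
Your proof is correct and follows essentially the same route as the paper: both bound $|f_i(t)|^2$ by $4^{-|i|}$ via \eqref{eqGA_0} and then use the convergence of the geometric series, the paper simply asserting the existence of $n(\varepsilon)$ where you compute it explicitly. Your value $5/3$ for $\sum_{i\in\mathbb Z}4^{-|i|}$ is in fact the correct one (the paper records $11/3$, a harmless arithmetic slip).
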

\begin{proof} Since
\begin{equation}\label{eqGA3}
\|f(t)\|^{2}=\sum\limits_{i\in \mathbb
Z}|f_{i}(t)|^{2}=\sum\limits_{i\in \mathbb
Z}\frac{\sin^{2}(\omega_{i}t+\ln(1+t^{2}))}{2^{2|i|}}\le
\sum\limits_{i\in \mathbb Z}\frac{1}{2^{2|i|}}=\frac{11}{3}
\nonumber
\end{equation}
then $f(t)\in \ell_{2}$.

Note that for every $\varepsilon >0$ there exists a number
$n(\varepsilon)\in \mathbb N$ such that
\begin{equation}\label{eqGA4}
\sum\limits_{|i|\ge
n(\varepsilon)}\frac{1}{4^{|i|}}<\frac{\varepsilon^{2}}{4} .
\end{equation}

By (\ref{eqGA_0}) and (\ref{eqGA4}) we obtain
\begin{equation}\label{eqGA5}
\sum\limits_{|i|\ge n(\varepsilon)}|f_{i}(t)|^{2}\le
\sum\limits_{|i|\ge
n(\varepsilon)}\frac{1}{4^{|i|}}<\frac{\varepsilon^{2}}{4}
\end{equation}
for all $t\in \mathbb R$.
\end{proof}

Consider the function $f:\mathbb R\to \ell_{2}$ defined by
$f(t):=(f_{i}(t))_{i\in \mathbb Z}$ for all $t\in \mathbb R$.

\begin{lemma}\label{lGA2} The following statements hold:
\begin{enumerate}
\item the function $f:\mathbb R\to \ell_{2}$ is uniformly
continuous on $\mathbb R$; \item the set $f(\mathbb R)$ is a
precompact subset of $\ell_{2}$.
\end{enumerate}
\end{lemma}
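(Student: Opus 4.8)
The plan is to verify the two assertions directly, in each case decomposing a sequence into a finite ``head'' and an infinite ``tail'', with the tail always dominated uniformly in the time variable by the geometric bound (\ref{eqGA_0}).

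For the uniform continuity in (i), fix $\varepsilon>0$. First I would choose $N=n(\varepsilon)\in\mathbb N$ so large that $4\sum_{|i|\ge N}4^{-|i|}<\varepsilon^{2}/2$. Then, using $|f_{i}(t)-f_{i}(s)|\le|f_{i}(t)|+|f_{i}(s)|\le 2\cdot 2^{-|i|}$ from (\ref{eqGA_0}), the tail satisfies $\sum_{|i|\ge N}|f_{i}(t)-f_{i}(s)|^{2}\le 4\sum_{|i|\ge N}4^{-|i|}<\varepsilon^{2}/2$ for \emph{all} $t,s\in\mathbb R$ simultaneously. For the head $\sum_{|i|<N}|f_{i}(t)-f_{i}(s)|^{2}$, which contains only finitely many terms, I would apply the mean value theorem together with the derivative bound (\ref{eqGA_1}): $|f_{i}(t)-f_{i}(s)|\le(2+|\omega_{i}|)|t-s|$, so this head is at most $M_{N}|t-s|^{2}$ with $M_{N}:=\sum_{|i|<N}(2+|\omega_{i}|)^{2}<\infty$. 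Taking $\delta:=\varepsilon/\sqrt{2M_{N}+1}$ makes the head $<\varepsilon^{2}/2$ whenever $|t-s|<\delta$, whence $\|f(t)-f(s)\|<\varepsilon$; this gives (i). (Alternatively, one may simply invoke Lemma \ref{lGA_1}, by which each of the finitely many functions $f_{i}$ with $|i|<N$ is uniformly continuous, and take $\delta$ to be the minimum of the associated moduli.)

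For (ii) I would use the standard criterion for precompactness in $\ell_{2}$ (the sequence-space version of the Fr\'echet--Kolmogorov/Riesz theorem): a subset $S\subseteq\ell_{2}$ is precompact if and only if it is bounded and has uniformly small tails, i.e.\ $\sup_{\xi\in S}\sum_{|i|\ge n}|\xi_{i}|^{2}\to 0$ as $n\to\infty$. Boundedness of $f(\mathbb R)$ is the estimate $\|f(t)\|^{2}\le 11/3$ already obtained in the proof of Lemma \ref{lGA1}, and the required uniform tail decay is precisely (\ref{eqGA2}). Hence $f(\mathbb R)$ is precompact in $\ell_{2}$, which is (ii).

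Both parts are routine; the only points needing a little care are quoting the correct compactness criterion in $\ell_{2}$ and, in (i), organizing the $\varepsilon/2$ split so that a single cut-off $N$ works uniformly for all pairs $(t,s)$ — neither of which is a genuine obstacle. Finally, combining Lemma \ref{lGA2} with Lemma \ref{l2.1} shows that $f$ is translation-compact, i.e.\ that Condition (\textbf{C1}) holds for this choice of $f$, so Example \ref{exGA1} is indeed covered by Theorem \ref{thCGA2}.
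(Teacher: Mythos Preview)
Your proposal is correct and follows essentially the same route as the paper: a head/tail split with the tail controlled uniformly via (\ref{eqGA_0}) (the paper uses the equivalent (\ref{eqGA2})) and the finite head handled by uniform continuity of the individual $f_i$, together with the standard bounded-plus-uniform-tails criterion for precompactness in $\ell_2$ (which the paper cites as Theorem~5.25 in \cite{LS_1974}). The only cosmetic difference is that for the head in (i) the paper invokes Lemma~\ref{lGA_1} directly, whereas you first give the explicit Lipschitz bound from (\ref{eqGA_1}) and then mention Lemma~\ref{lGA_1} as an alternative.
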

\begin{proof}
For every $\varepsilon >0$ we choose $n(\varepsilon)\in \mathbb N$
such that (\ref{eqGA5}) holds. Since the functions $f_{i}$ ($|i|<
n(\varepsilon)$) are uniformly continuous then for $\varepsilon$
there exists a positive number $\delta =\delta(\varepsilon)$ such
that $|t_1-t_2|<\delta$ implies (see Lemma \ref{lGA_1})
\begin{equation}\label{eqGA6}
\sum\limits_{|i|<
n(\varepsilon)}|f_{i}(t_1)-f_{i}(t_2)|^{2}<\frac{\varepsilon^{2}}{2}
.
\end{equation}

On the other hand we have
\begin{eqnarray}\label{eqGA7}
\|f(t_1)-f(t_2)\|^{2}=\sum\limits_{|i|<
n(\varepsilon)}|f_{i}(t_1)-f_{i}(t_2)|^{2}+ \sum\limits_{|i|\ge
n(\varepsilon)}|f_{i}(t_1)-f_{i}(t_2)|^{2}
\end{eqnarray}
for any $t_1,t_2\in \mathbb R$. From (\ref{eqGA7}), (\ref{eqGA6})
and (\ref{eqGA2}) we receive
\begin{eqnarray}\label{eqGA8}
& \|f(t_1)-f(t_2)\|^{2}=\sum\limits_{|i|<
n(\varepsilon)}|f_{i}(t_1)-f_{i}(t_2)|^{2}+ \sum\limits_{|i|\ge
n(\varepsilon)}|f_{i}(t_1)-f_{i}(t_2)|^{2}\le \nonumber \\
& \sum\limits_{|i|< n(\varepsilon)}|f_{i}(t_1)-f_{i}(t_2)|^{2}+
\sum\limits_{|i|\ge
n(\varepsilon)}2(|f_{i}(t_1)|^{2}+|f_{i}(t_2)|^{2})\nonumber \\
&  < \frac{\varepsilon^{2}}{2}+
2(\frac{\varepsilon^{2}}{4}+\frac{\varepsilon^{2}}{4})=\varepsilon^{2}
\end{eqnarray}
and, consequently, $\|f(t_1)-f(t_2)\|<\varepsilon$ for any
$t_1,t_2\in \mathbb R$ with $|t_1-t_2|<\delta$.

Let now $v$ be an arbitrary element of the set $f(\mathbb R)$ then
there exists a number $s\in \mathbb R$ such that $v=f(s)$. By
Lemma \ref{lGA1} (item (ii)) for every $\varepsilon >0$ there
exists a number $n(\varepsilon)\in \mathbb N$ such that
\begin{equation}\label{eqGA9}
\sum\limits_{|i|\ge n(\varepsilon)}|v_i|^{2}= \sum\limits_{|i|\ge
n(\varepsilon)}\|f_i(s)|^{2}<\frac{\varepsilon^{2}}{4} \nonumber
\end{equation}
and, consequently, by Theorem 5.25 \cite[Ch.V,p.167]{LS_1974} the
subset $f(\mathbb R)$ of $\ell_{2}$ is precompact.
\end{proof}

\begin{coro}\label{corGA1} The function $f$ is Lagrange stable,
i.e., the set $H(f)$ is a compact subset of $C(\mathbb
R,\ell_{2})$.
\end{coro}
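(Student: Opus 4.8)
The plan is to deduce Lagrange stability of $f$ directly from Lemma~\ref{lGA2} together with the characterization of translation-compact functions recorded in Lemma~\ref{l2.1}, so that no fresh estimates are needed.

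First I would recall that, by Lemma~\ref{lGA2}, the function $f:\mathbb R\to\ell_{2}$ is uniformly continuous on $\mathbb R$ and the set $f(\mathbb R)$ is precompact in $\ell_{2}$. Since $\ell_{2}$ is a complete metric space, the closure $Q:=\overline{f(\mathbb R)}$ of the precompact set $f(\mathbb R)$ is a compact subset of $\ell_{2}$. Thus both conditions appearing in statement~(ii) of Lemma~\ref{l2.1} (applied with $\mathfrak B=\ell_{2}$) are satisfied, namely that $\overline{f(\mathbb R)}$ is compact in $\ell_{2}$ and that $f$ is uniformly continuous. Hence Lemma~\ref{l2.1} yields that $f$ is translation-compact, i.e., the set $\{f^{h}\ |\ h\in\mathbb R\}$ is precompact in $C(\mathbb R,\ell_{2})$.

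Finally, recalling from Section~\ref{Sec1} that the metric space $(C(\mathbb R,\ell_{2}),d)$ is complete, the closure of the precompact set $\{f^{h}\ |\ h\in\mathbb R\}$ is compact; but this closure is by definition $H(f)$. Therefore $H(f)$ is a compact subset of $C(\mathbb R,\ell_{2})$, which is precisely the assertion that $f$ is Lagrange stable, and the proof is complete. I do not expect a real obstacle here: the only point needing care is to invoke the completeness of $(C(\mathbb R,\ell_{2}),d)$ (stated in the introduction) so that precompactness upgrades to compactness upon taking closures; all of the substantive work — the uniform-continuity estimate and the uniform decay of tails that together give precompactness of $f(\mathbb R)$ — has already been carried out in Lemmas~\ref{lGA_1}--\ref{lGA2}.
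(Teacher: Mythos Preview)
Your proof is correct and follows exactly the paper's approach: the paper simply states that the corollary follows from Lemmas~\ref{l2.1} and~\ref{lGA2}, and your argument spells out precisely that derivation, including the routine use of completeness to pass from precompactness to compactness.
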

\begin{proof} This statement follows from Lemmas \ref{l2.1} and
\ref{lGA2}.
\end{proof}

Consider the system of differential equations
\begin{equation}\label{eqAP2}
u_{i}' = \nu (u_{i-1} - 2u_i + u_{i+1}) - \lambda u_{i} + F(u_i) +
\frac{\sin (\omega_{i} t +\ln(1+t^{2}))}{2^{|i|}} \quad (i \in
\mathbb{Z}),
\end{equation}
where $F(u) = -u - u^{3}$ for all $u \in \mathbb{R}$.

Along with this system of equations (\ref{eqAP2}), consider the
(equivalent) equation
\begin{equation}\label{eqAP3}
u' = \Lambda u - \lambda u + \tilde{F}(u) + f(t)
\end{equation}
in the space $\ell_{2}$.

Taking into account the results above it is easy to show that the
Conditions (\textbf{C1})-(\textbf{C3}) for the equation
(\ref{eqAP3}) are fulfilled. According to our result this equation
admits a compact global attractor.
\end{example}

\section{Funding}

This research was supported by the State Program of the Republic
of Moldova "Monotone Nonautonomous Dynamical Systems
(24.80012.5007.20SE)" and partially was supported by the
Institutional Research Program 011303 "SATGED", Moldova State
University.

\section{Conflict of Interest}

The authors declare that the have not conflict of interest.

\medskip
\textbf{ORCID (D. Cheban):} https://orcid.org/0000-0002-2309-3823

\textbf{ORCID (A. Sultan):} https://orcid.org/0009-0003-9785-9291

\end{document}